\documentclass[12pt]{article}
\usepackage{amsmath}
\usepackage{amssymb}
\usepackage{amsthm}
\usepackage{float}
\usepackage{url}
\usepackage{tikz,pgf}
\usepackage{tikz-3dplot}
\usetikzlibrary{math}
\usetikzlibrary{arrows.meta}

\title{An Elliptic Curve Governing Hopf Linking in an $A_4$-Symmetric Tensegrity}
\author{Taizo Sadahiro}
\date{}

\newtheorem{prop}{Proposition}
\newtheorem{thm}{Theorem}
\newtheorem{rmk}{Remark}

\usepackage{graphicx}	
\begin{document}

\maketitle

\begin{abstract}
We study in detail an $A_4$-symmetric tensegrity appearing in Connelly's catalog.
The realizable configurations form a one-parameter family that can be
parametrized by points on the elliptic curve with Cremona label 30a2.
The curve has only twelve rational points, 
among which only one corresponds to a stable tensegrity configuration 
whose cable framework forms a cuboctahedron.
From a topological viewpoint, however, the underlying pair of the
strut triangles
preserves a Hopf link structure throughout the entire interval $0<\omega_1<1$
of the stress parameter.
\end{abstract}

\section{Introduction}

Tensegrity structures provide a rich interaction between
geometry, combinatorics, and rigidity theory.
In his catalog  \cite{ConnellyCatalog, ConnellySite, connelly2022frameworks} of symmetric tensegrities,
Connelly described many remarkable examples
whose configurations depend on continuous stress parameters.

In this paper we study in detail a tensegrity with $A_4$ symmetry
appearing in Connelly's catalog.
Combinatorially, the underlying graph is the Cayley graph
\[
{\rm Cay}\bigl(A_4,\{(1,2,3),(1,3,4),(2,4,3)\}\bigr),
\]
in which the edges generated by $(1,2,3)$ form four disjoint triangles.
These triangles play the role of the struts of the tensegrity,
while the remaining edges act as cables.

The equilibrium condition leads to a stress matrix depending on two
parameters.
The condition that the matrix has a nontrivial kernel
defines a plane cubic curve,
which turns out to be the elliptic curve with the Cremona label 30a2
in the LMFDB database \cite{LMFDB}.
Each point on this curve determines a realization of the graph
as a tensegrity in $\mathbb{R}^3$.

\begin{figure}[H]
\begin{center}
 \includegraphics[bb=0 300 1536 1748,clip,width=6cm]{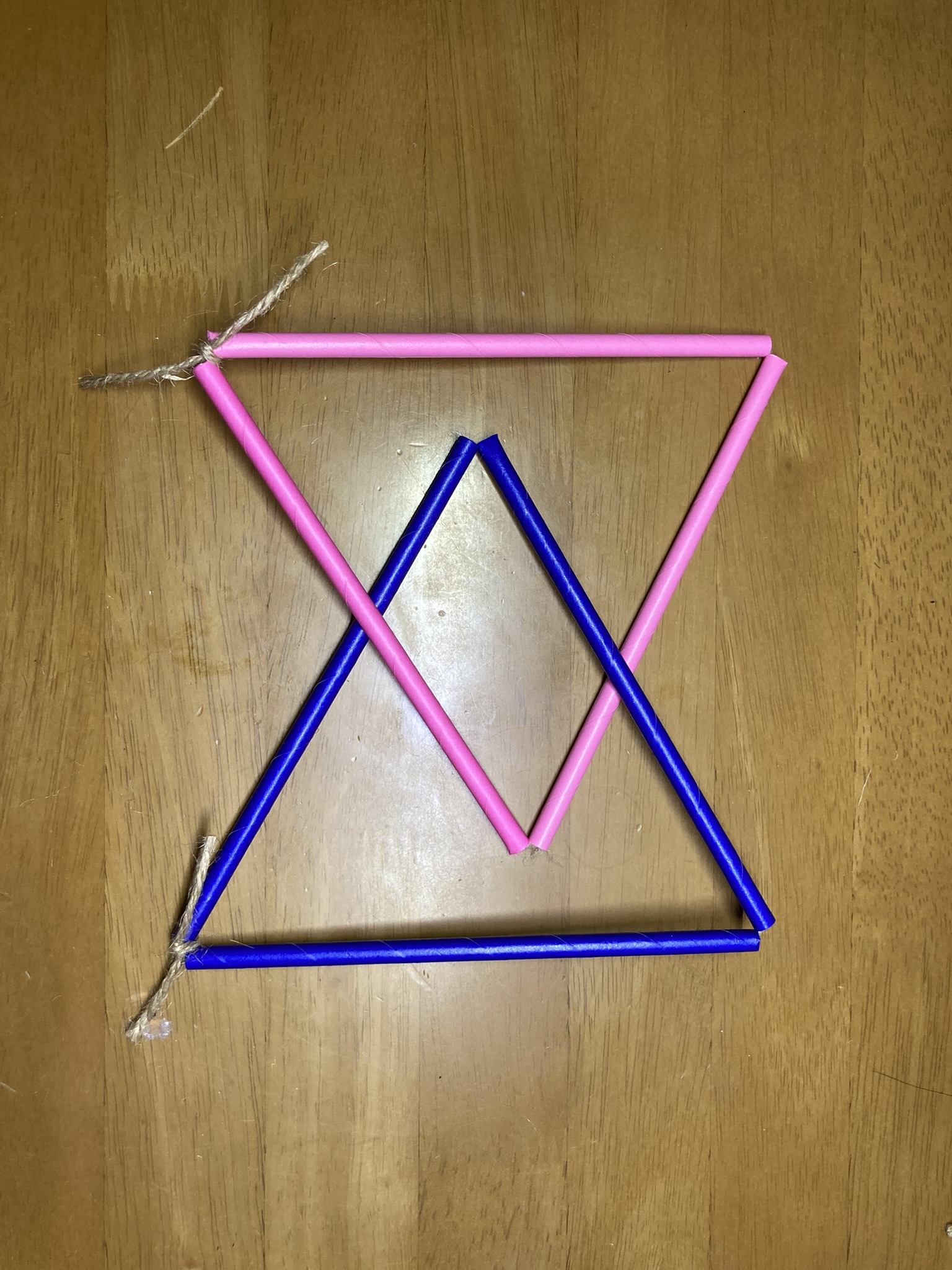}
 \includegraphics[bb=0 300 1536 1748,clip,width=6cm]{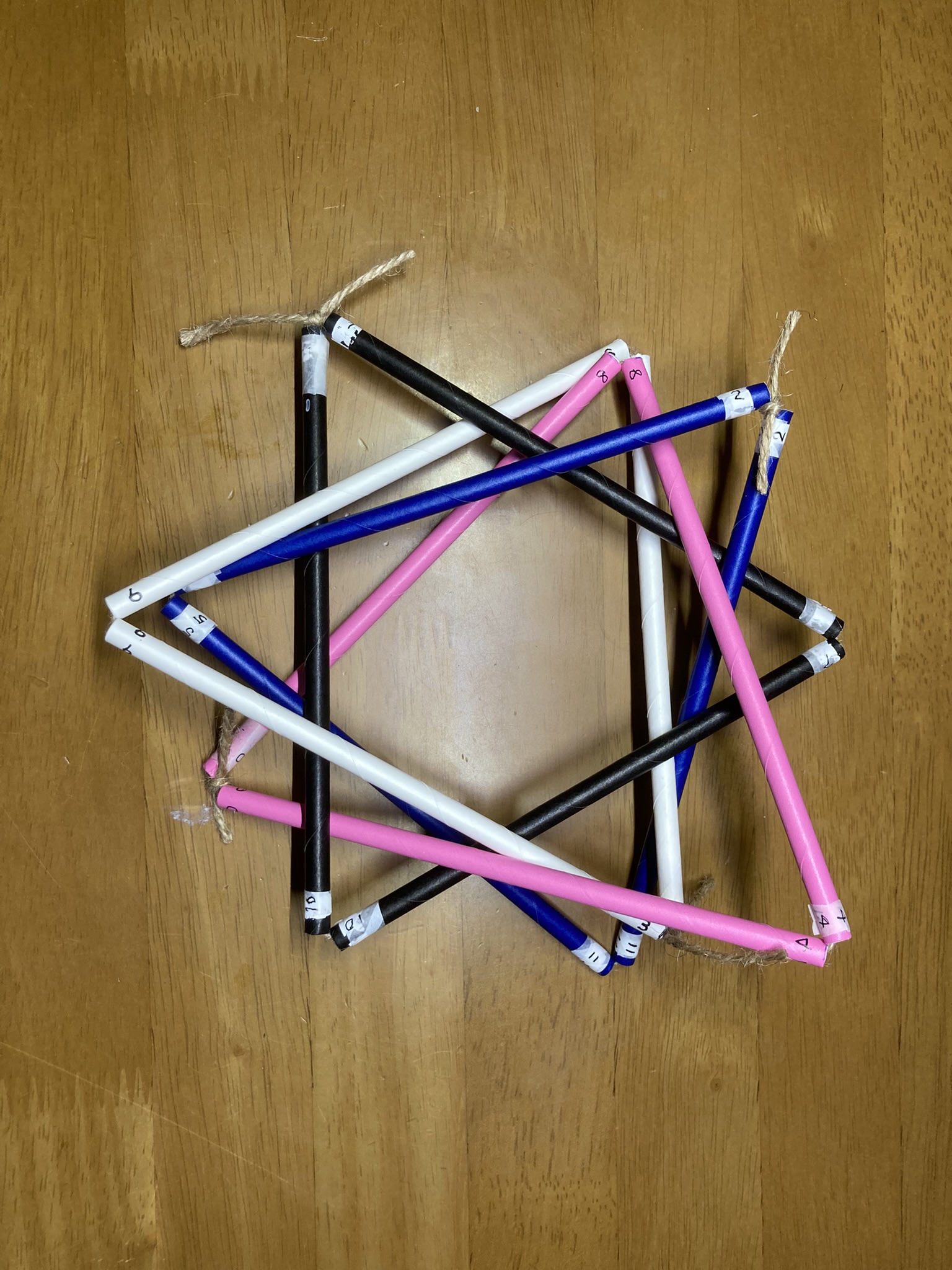}
\end{center}
 \caption{A Hopf link formed of straw triangles (left).
 A 4-component link in which each pair of triangles is Hopf linked (right).}
 \label{fig:links}
\end{figure}

\begin{figure}[H]
 \begin{center}
  \includegraphics[bb=350 500 1186 1348,clip,width=5cm]{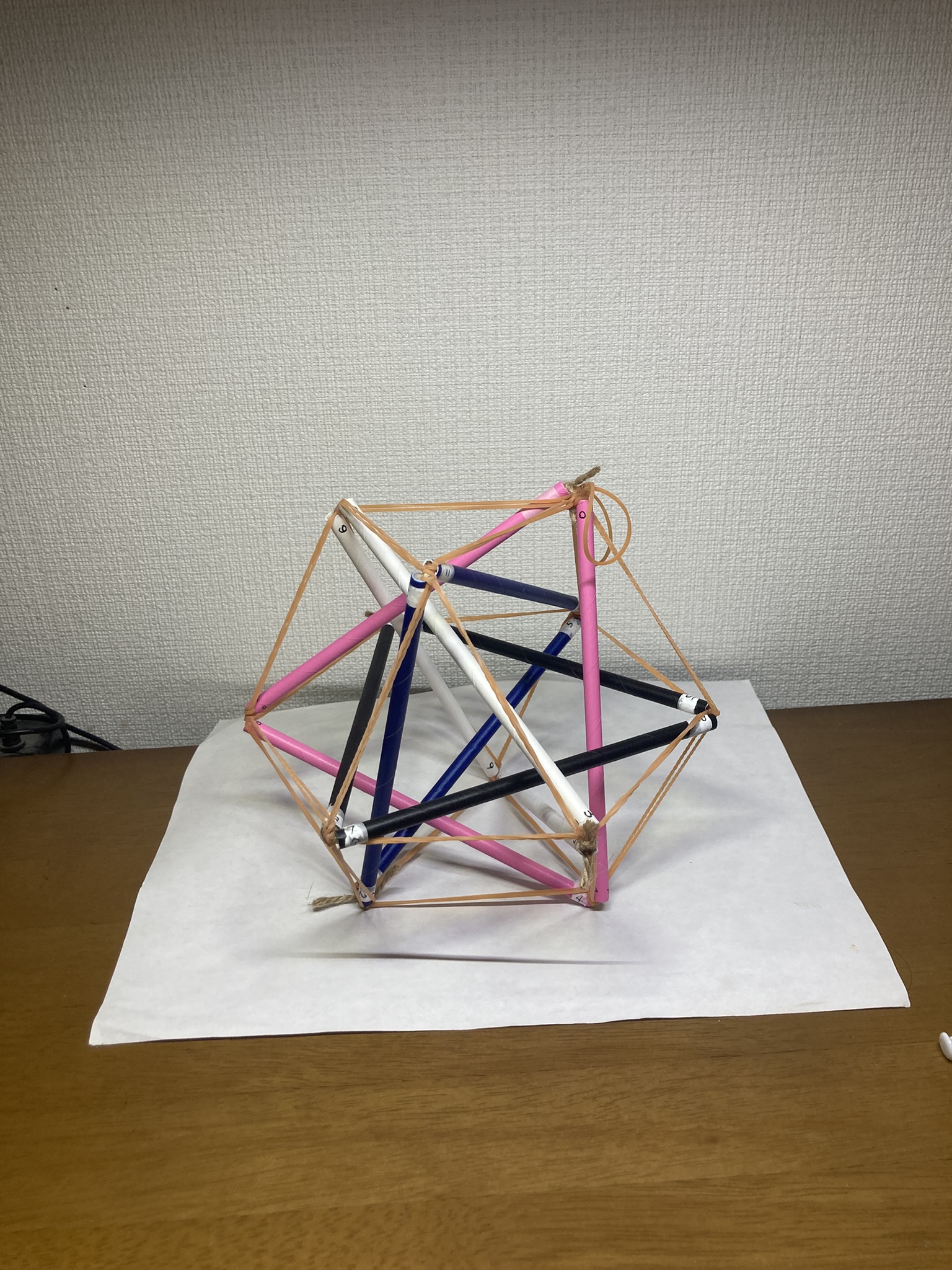}
  \includegraphics[bb=0 0 852 777,clip,width=5cm]{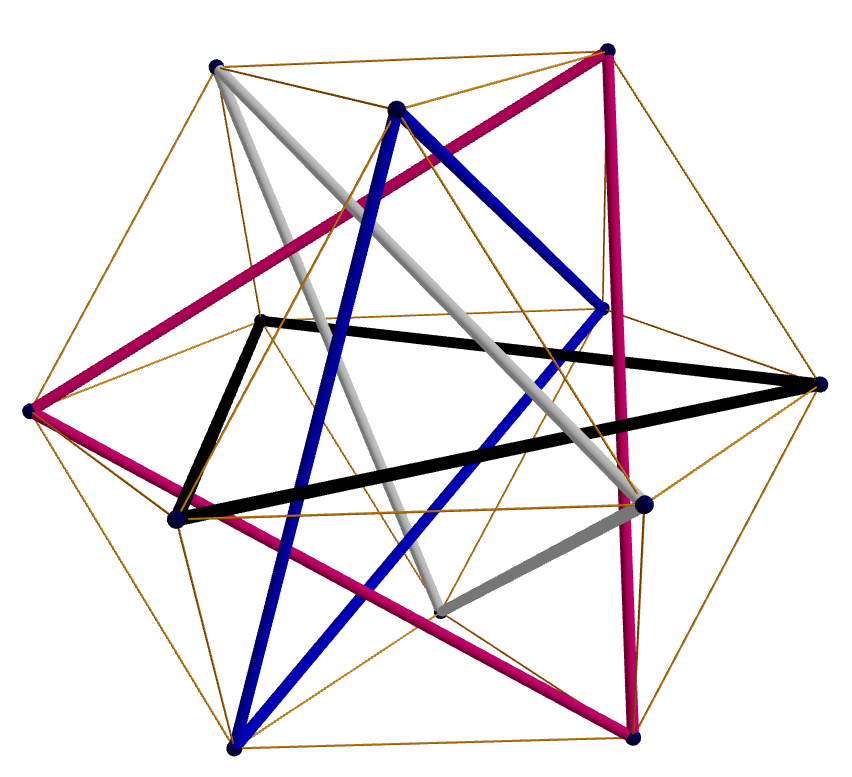}
  \includegraphics[bb=0 0 852 777,clip,width=5cm]{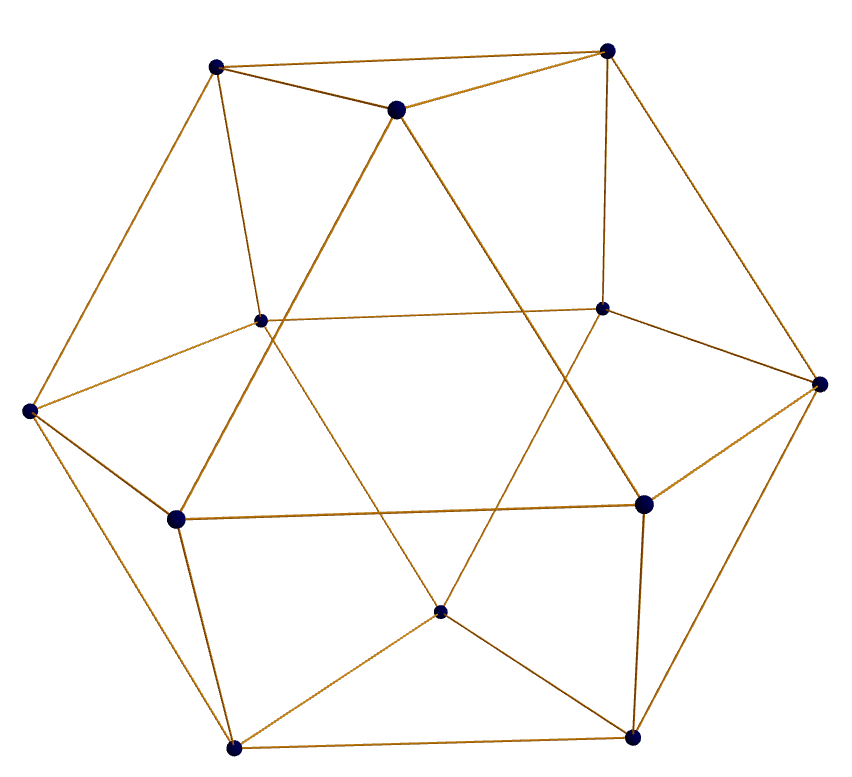}
  \caption{By adding $24$ cables (rubber bands) to the 4-component link in Figure $\ref{fig:links}$,
  An $A_4$-symmetric tensegrity is physically realized (left).
  Its computer generated image (middle).
  The cables form the  skelton graph of the cuboctahedron (right).}
  \label{fig:physicalrealization}
 \end{center}
\end{figure}

In Connelly's catalog, the equilibrium stresses are written as
$(\omega_1,\omega_{-1})$, where $\omega_1$ is the stress on the cables
and $\omega_{-1}$ is the stress on the struts.
For readability, we write these parameters as $(x,y)$ in the present paper.
Thus $x=\omega_1$ and $y=\omega_{-1}$.
\begin{thm}
For every parameter value satisfying $0<x<1$,
the four triangular strut components form
a $4$-component mutual Hopf link.
\end{thm}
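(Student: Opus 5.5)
Using the $A_4$-symmetry, we write down the configuration explicitly and reduce the statement to a single sign condition on one pair of triangles.

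\textbf{Explicit configuration.} By $A_4$-equivariance, the whole configuration is determined by one vertex $p=(a,b,c)\in\mathbb{R}^3$. We label vertices by $A_4$ and act by the standard rotation representation of $A_4$ on $\mathbb{R}^3$, in which the $3$-cycles are rotations by $2\pi/3$ about the body diagonals.
\begin{itemize}
\item The strut triangle through $p$ is the orbit of $p$ under the rotation $R$ of order three corresponding to $(1,2,3)$.
\item The other three triangles are the images of this one under the Klein four-group $V_4$, which acts by sign changes of two coordinates.
\item The equilibrium (kernel) condition of the stress matrix is a linear system in $(a,b,c)$ with coefficients depending on $(x,y)$.
\item On the cubic curve, we solve for $p$ up to scale as a function of $x$. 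Along the real branch over $0<x<1$ we expect rational functions of $x$ and $y$, with $y$ eliminated via the curve equation.
\end{itemize}

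\textbf{Reduction to one pair.} The group $A_4$ acts transitively on unordered pairs of triangles: $V_4$ acts simply transitively on the four triangles, and an element of order three fixes one triangle and permutes the other three cyclically. Also, rotations preserve orientation and hence linking numbers. It therefore suffices to show that a single pair $T_1$, $T_2=DT_1$, with $D=\mathrm{diag}(1,-1,-1)$, forms a Hopf link for all $0<x<1$.

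\textbf{Linking criterion for two triangles.} Two disjoint triangles (polygonal unknots) form a Hopf link exactly when their linking number is $\pm1$. This happens exactly when one edge of $T_2$ pierces the interior of the flat disc spanned by $T_1$ and the other two edges miss it. Equivalently, the segment $T_2$ crosses the plane of $T_1$ at two points, exactly one of which lies inside $T_1$. We carry this out concretely as follows.
\begin{enumerate}
\item Compute the plane of $T_1$. Its normal is the axis of $R$, and it passes through the centroid of $T_1$.
\item Find the signs of the three vertices of $T_2$ relative to this plane. These are affine functions of $(a,b,c)$.
\item Identify the two edges of $T_2$ crossing the plane, and compute barycentric coordinates of their crossing points with respect to $T_1$.
\item Show that exactly one crossing point has all barycentric coordinates positive.
\end{enumerate}
Each of these quantities is a rational function of $x$ along the branch. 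The claim becomes a finite list of polynomial inequalities on $0<x<1$.

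\textbf{Continuity as a fallback.} Alternatively, the linking number is a homotopy invariant as long as the two triangles stay disjoint. So it suffices to:
\begin{itemize}
\item verify that linking number $\pm1$ occurs at one point, for instance the cuboctahedral rational point, where this can be checked by hand;
\item prove that $T_1\cap T_2=\emptyset$ for every $0<x<1$.
\end{itemize}
Disjointness fails only if some edge of $T_2$ meets $T_1$ (or a vertex lies on it). This is an algebraic condition in $x$, and we must show it has no root in $(0,1)$. We also need the branch of configurations to be continuous and nondegenerate on the whole interval, meaning the scale of $p$ can be chosen nonzero continuously.

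\textbf{Main obstacle.} The hardest step is certifying that this disjointness (or barycentric) polynomial has no zeros in $(0,1)$. The parametrization passes through the elliptic curve, so the expressions involve $y$ satisfying the cubic. Our first attempt is to eliminate $y$ by a resultant with the curve equation, obtaining a univariate polynomial in $x$, and then apply Sturm sequences or Descartes' rule on $(0,1)$. Any roots that appear must then be checked against the correct real branch. We also expect that at the endpoints $x\to0$ and $x\to1$ the configuration degenerates, with triangles collapsing or touching. This explains why the statement is restricted to the open interval, and these endpoints are exactly where sign changes may occur.
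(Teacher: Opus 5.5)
Your plan is correct and shares the paper's skeleton. You reduce to one pair via the $A_4$-action on pairs of triangles, anchor at the cuboctahedral point $(\tfrac12,-\tfrac13)$, and certify with resultants $\operatorname{Res}_y(d,\cdot)$ that certain rational functions on the cubic keep their signs for $0<x<1$. The difference is in what you track.

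The paper follows only the one strut of $\Delta'$ that pierces $\Delta$, via its parameters $\tau,R_1,R_2$. It shows these stay in $0<\tau<1$, $R_1,R_2>0$, $R_1+R_2<1$, and then asserts that the intersection pattern, and hence the link type, persists. You track either the full crossing pattern of $T_2$ with the plane of $T_1$, or disjointness of $T_1,T_2$ together with homotopy invariance of the linking number. The paper's choice is more economical, since it needs only three explicit functions. However, it never separately checks that the other two edges of $\Delta'$ avoid the closed disk of $\Delta$, and this is needed both for $|\operatorname{lk}|=1$ and for the triangles to stay disjoint. Your version supplies exactly this. The disjointness variant even gives, by symmetry, an isotopy of the whole $4$-component link, not just constancy of pairwise linking.

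There are three points to correct when you carry it out.
\begin{itemize}
\item The data are rational on the genus-one curve, not in $x$. On the arc, $y=\tfrac16\bigl(x^2-x-3+\sqrt{x^4-2x^3+31x^2-30x+9}\bigr)$, so the branch check you mention is essential. For example, the paper's $\operatorname{Res}_y(d,2xy-5y-3)$ vanishes at $x=\tfrac12$, but that root comes from $(\tfrac12,-\tfrac34)$ on the other branch.
\item The anchor point is non-generic. Take the paper's $\rho$, with $\rho(s)(a,b,c)=(b,-c,-a)$, and its kernel vector $\mathbf p_0=(a,b,c)$. For your pair, the vertex $(a,-b,-c)$ of $T_2$ lies in the plane $X+Y-Z=a+b-c$ of $T_1$ iff $b=c$. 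On the curve, $c-b=2(x-1)\bigl(x+(2-x)y\bigr)$, which for $0<x<1$ vanishes (with a sign change) only at $(\tfrac12,-\tfrac13)$. So the second crossing edge switches exactly there, and your step identifying the crossing edges must be split at $x=\tfrac12$.
\item For disjointness, coplanarity of two edges is only a necessary condition for them to meet. Rather than asking a coplanarity polynomial to have no roots in $(0,1)$, require that each edge's crossing point with the plane of $T_1$, whenever it lies on the edge, never has a vanishing barycentric coordinate with respect to $T_1$.
\end{itemize}
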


The proof proceeds as follows.
First we analyze a distinguished rational point
$
(x,y)=\left(\frac12,-\frac13\right),
$
which corresponds to the cuboctahedral configuration.
The physical realization of the cuboctahedral configuration
is illustrated in Figure $\ref{fig:links}$ and $\ref{fig:physicalrealization}$.
At this point the Hopf linking can be verified directly.
We then express the intersection parameters between the triangles
as rational functions on the elliptic curve.
Using resultant computations we show that these functions have
no zeros or poles in the interval $0<x<1$,
which implies that the linking structure cannot change.

Thus the Hopf link structure persists for all admissible parameter
values.
The proof combines elementary geometry with explicit algebraic
computations carried out using a computer algebra system.

\section{Construction of the tensegrity}

We begin by explaining how this tensegrity is constructed
in \cite{connelly2022frameworks}.
Viewed as a graph, it is isomorphic to the Cayley graph
\[
 {\rm Cay}\bigl(A_4,\{s=(1,2,3),\, c_1=(1,3,4),\, c_2=(2,4,3)\}\bigr).
\]
A {\em strut} corresponds to an edge generated by $s=(1,2,3)$,
and a {\em cable} corresponds to an edge generated by $(1,3,4)$ or $(2,4,3)$.
Therefore, the struts form four pairwise disjoint triangles.

We use the standard three-dimensional irreducible representation
$\rho\colon A_4\to {\rm GL}(3,\mathbb{Z})$.
\[
 \rho(g_1) =
 \begin{pmatrix}
  0 & 1 & 0\\
  0 & 0 & -1\\
  -1 & 0 & 0
 \end{pmatrix},
 \qquad
 \rho(g_2) =
 \begin{pmatrix}
  1 & 0 & 0\\
  0 & -1 & 0\\
  0 & 0 & -1
 \end{pmatrix},
 \qquad
 \rho(g_3) =
 \begin{pmatrix}
  -1 & 0 & 0\\
  0 & 1 & 0\\
  0 & 0 & -1
 \end{pmatrix},
\]
where
\[
 g_1=s=(1,2,3), \qquad g_2=(1,2)(3,4), \qquad g_3=(1,3)(2,4).
\]

Then $\rho(s)=\rho(g_1)$, and
\[
 \rho(c_1)=\rho(g_1g_2)
 =
 \begin{pmatrix}
  0 & -1 & 0\\
  0 & 0 & 1\\
  -1 & 0 & 0
 \end{pmatrix},
 \qquad
 \rho(c_2)=\rho(g_1g_3)
 =
 \begin{pmatrix}
  0 & 1 & 0\\
  0 & 0 & 1\\
  1 & 0 & 0
 \end{pmatrix}.
\]
Thus we obtain the stress matrix
\begin{align*}
 \Omega(x,y)
 &= x\bigl(\rho(c_1)+\rho(c_1^{-1})\bigr)
  + (1-x)\bigl(\rho(c_2)+\rho(c_2^{-1})\bigr)
  + y\bigl(\rho(s)+\rho(s^{-1})\bigr)
  - 2(1+y)I_3 \\
 &=
 \begin{pmatrix}
  -2y-2 & -2x+y+1 & -2x-y+1 \\
  -2x+y+1 & -2y-2 & -y+1 \\
  -2x-y+1 & -y+1 & -2y-2
 \end{pmatrix},
\end{align*}
where $x$ (resp. $1-x$) 
is the stress parmeter 
of the cable corresponding to the generator $(1,3,4)$
(resp. $(2,4,3)$),
and $y$ is the stress parameter 
of the strut.

To realize the Cayley graph as a tensegrity in $\mathbb{R}^3$,
the equation
\begin{equation}
 \label{eq:null}
 \Omega(x,y)\mathbf{p}_0=\mathbf{0}
\end{equation}
must have a nontrivial solution in $\mathbb{R}^3$.
Hence we must have $\det\Omega(x,y)=0$.
A direct computation gives
\begin{equation}
 \label{eq:detOmega-affine}
 \det \Omega(x,y)
 = 8\bigl(x^2y + 3x^2 - xy - 3y^2 - 3x - 3y\bigr)=0.
\end{equation}
It is shown in \cite{connelly2022frameworks} that the parameter $(x,y)$
gives a stable tensegrity whenever $(x,y)$ lies on the arc of the curve
defined by \eqref{eq:detOmega-affine} with $0<x<1$ and $-\frac{1}{3}\le y<0$.
See Figure $\ref{fig:stresspars}$.
\begin{center}
 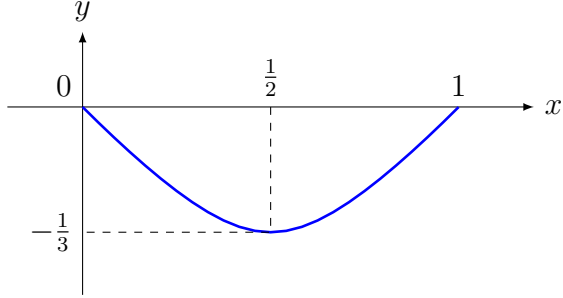
\begin{figure}[H]
  \begin{center}
  \begin{tikzpicture}[scale=5,
    declare function={
      g(\x) = 1/6*\x*\x - 1/6*\x + 1/6*sqrt(\x^4 - 2*\x^3 + 31*\x^2 - 30*\x + 9) - 1/2;
   }
   ]
   \draw[->, >=latex] (-0.2,0) -- (1.2,0) node[right] {$x$};
   \draw[->, >=latex] (0,-0.5) -- (0,0.2) node[above] {$y$};
   \draw [blue, domain=0:1, line width = 1] plot (\x, {g(\x)});
   \draw (0,0) node[above left] {$0$};
   \draw (1,0) node[above] {$1$};
   \draw ({1/2},0) node[above]{$\frac{1}{2}$};
   \draw (0,{-1/3}) node[left]{$-\frac{1}{3}$};
   \draw [dashed] ({1/2},0) -- ({1/2},{-1/3}) -- (0,{-1/3});
  \end{tikzpicture}
   \caption{Stress parameters $(x,y)$ which give stable tensegrities.}
   \label{fig:stresspars}
  \end{center}
 \end{figure}
\end{center}

A direct computation shows that
\[
\mathbf{p}_0=
\begin{pmatrix}
 -2xy + 3y^2 - 6x + 2y + 3\\
 -4x^2 - 4xy + 3y^2 + 4x + 10y + 3\\
 4x^2 - 3y^2 - 4x + 3
\end{pmatrix}
\]
satisfies \eqref{eq:null} under the condition \eqref{eq:detOmega-affine}.

For such a vector $\mathbf{p}_0$, the set of nodes (or vertices) of the tensegrity
is obtained as the $A_4$-orbit of $\mathbf{p}_0$, namely
\[
 V=\{\rho(g)\mathbf{p}_0 \mid g\in A_4\}.
\]
Two nodes $\rho(g)\mathbf{p}_0$ and $\rho(h)\mathbf{p}_0$
are connected by a strut if $h=gs$,
and by a cable if $h=gc_1$ or $h=gc_2$.
Figure $\ref{fig:deform}$ shows the
deformation of the tensegrity as $x$ varies from $0$ to $1$.

\begin{center}
 \begin{figure}[H]
  \begin{tikzpicture}[scale=0.9]
   \draw (0,0) node{\includegraphics[bb=0 0 814 696,clip,width=3.5cm]{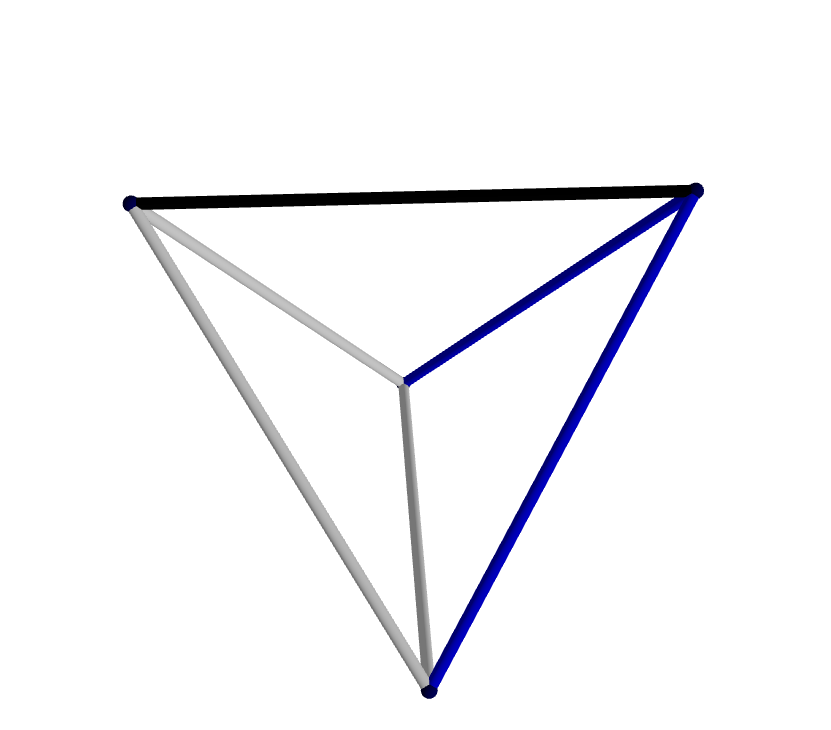}};
   \draw (0,1.5) node {$x=0$};
   \draw (3.5,0) node{\includegraphics[bb=0 0 814 696,clip,width=3.5cm]{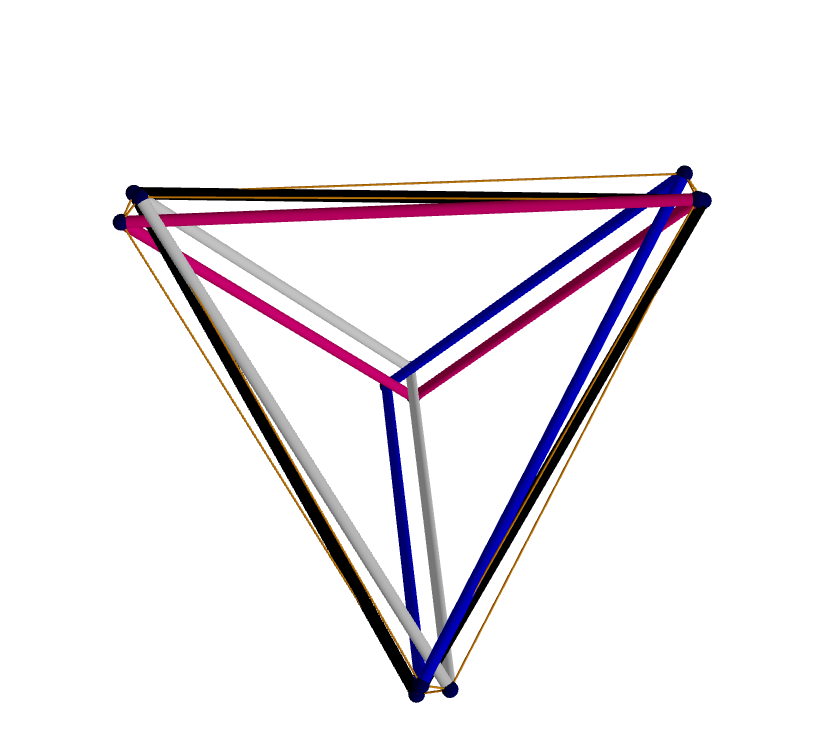}};
   \draw (3.5,1.5) node {$x=0.1$};
   \draw (7,0) node{\includegraphics[bb=0 0 814 696,clip,width=3.5cm]{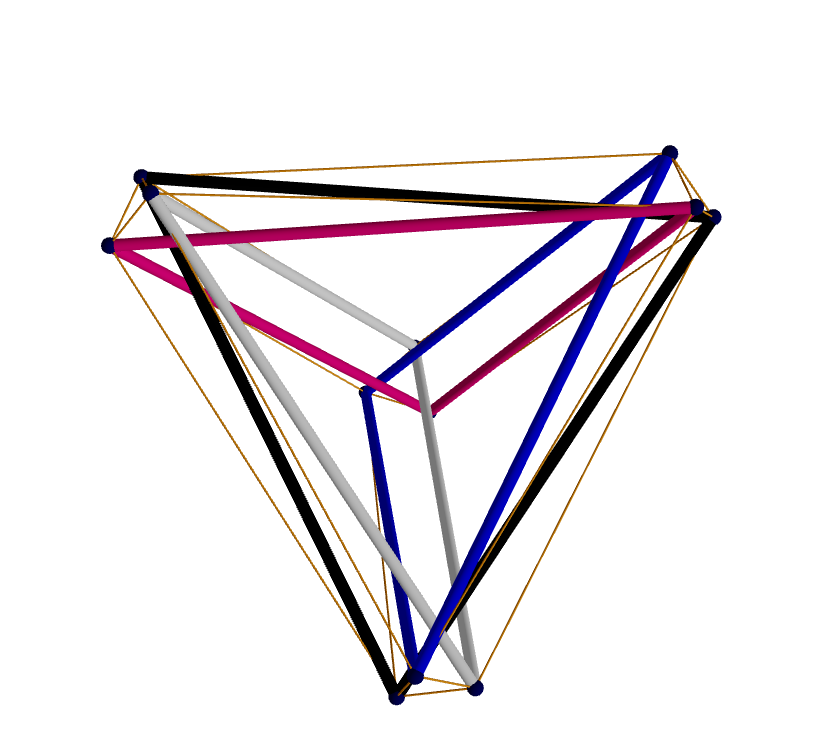}};
   \draw (7,1.5) node {$x=0.2$};
   \draw (10.5,0) node{\includegraphics[bb=0 0 814 696,clip,width=3.5cm]{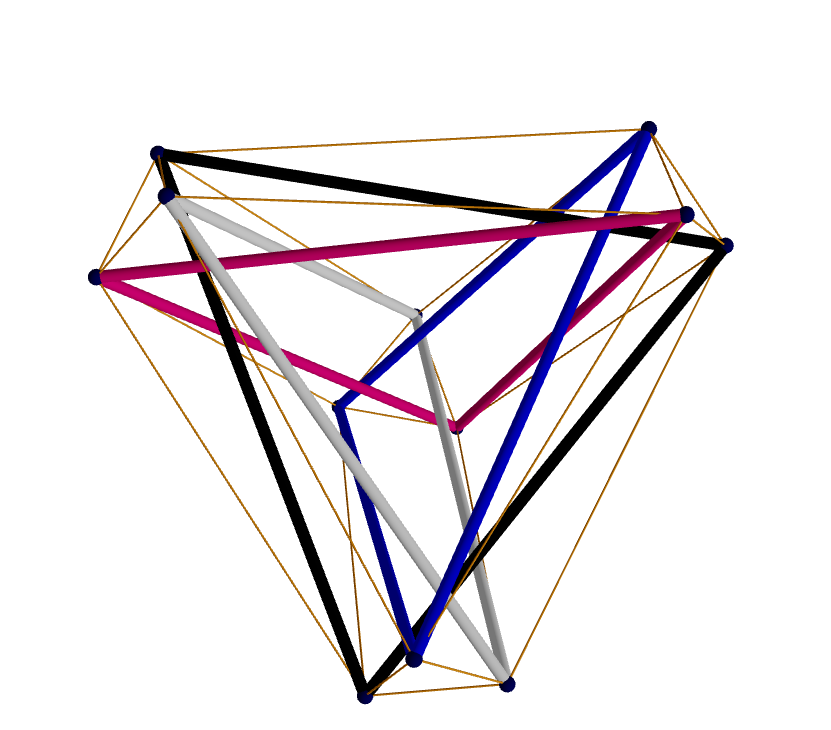}};
   \draw (10.5,1.5) node {$x=0.3$};
   \draw (14,0) node{\includegraphics[bb=0 0 814 696,clip,width=3.5cm]{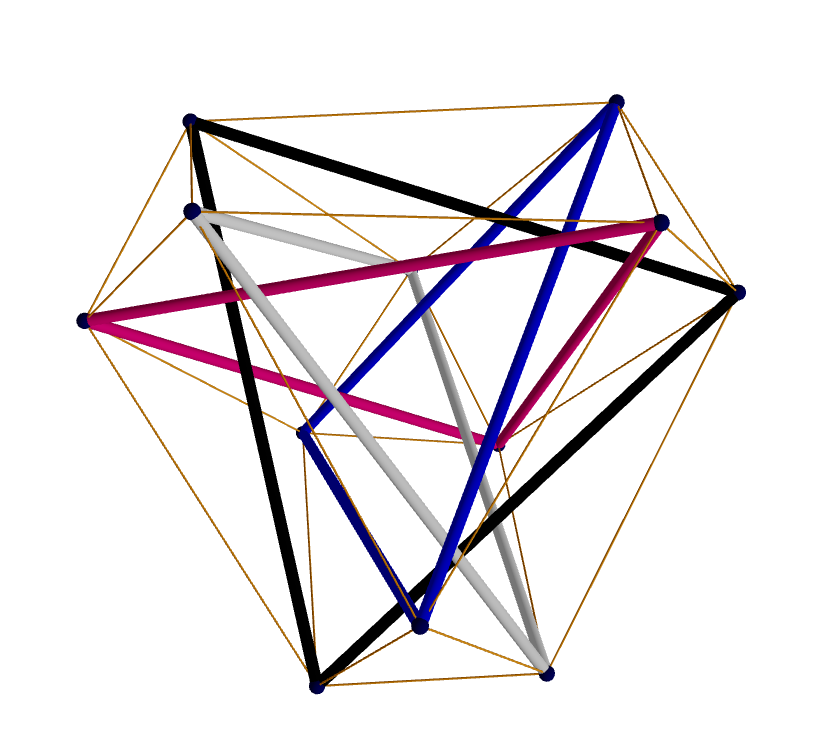}};
   \draw (14,1.5) node {$x=0.4$};
   \draw (14,-3.5) node{\includegraphics[bb=0 0 814 696,clip,width=3.5cm]{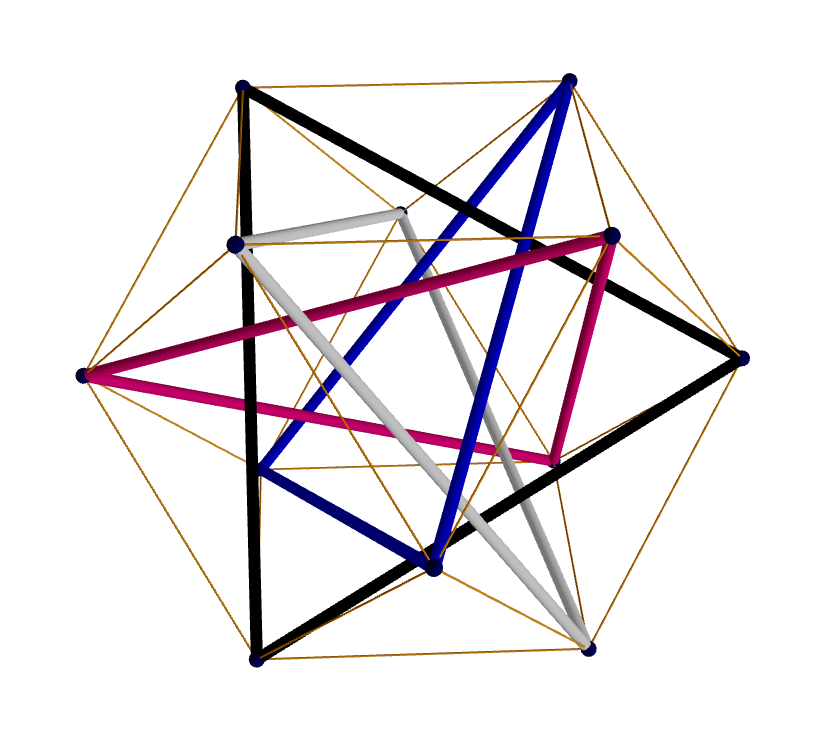}};
   \draw (11.5,-3.5) node {$x=0.5$};
   \draw (14,-7) node{\includegraphics[bb=0 0 814 696,clip,width=3.5cm]{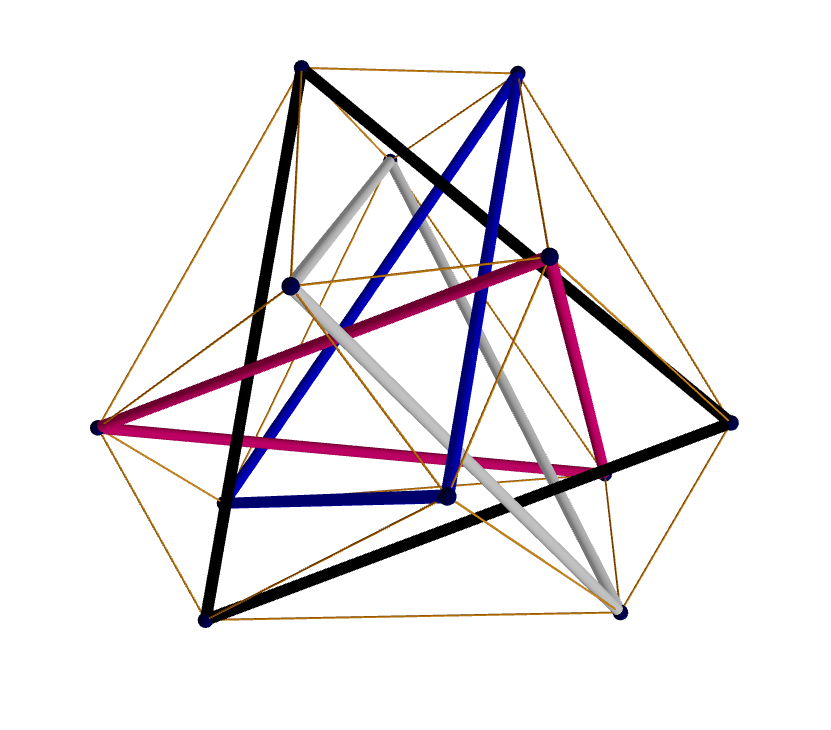}};
   \draw (14,-8.5) node {$x=0.6$};
   \draw (10.5,-7) node{\includegraphics[bb=0 0 814 696,clip,width=3.5cm]{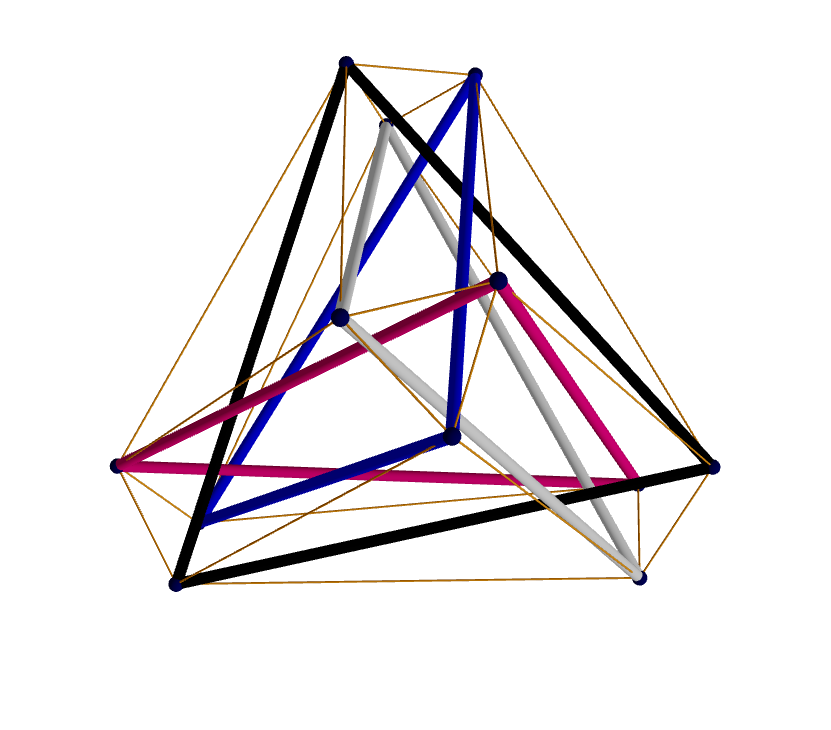}};
   \draw (10,-8.5) node {$x=0.7$};
   \draw (7,-7) node{\includegraphics[bb=0 0 814 696,clip,width=3.5cm]{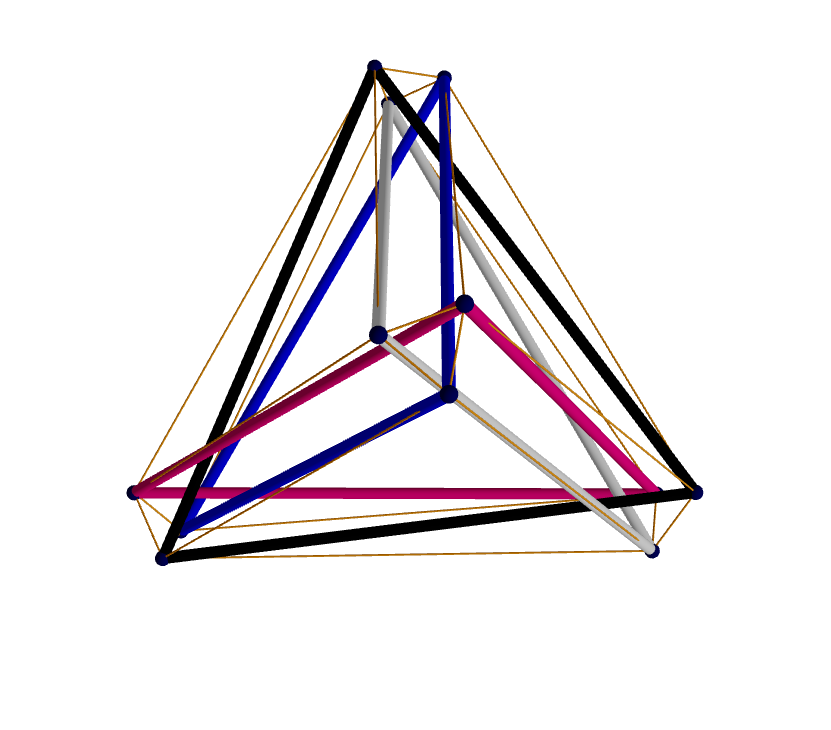}};
   \draw (7,-8.5) node {$x=0.8$};
   \draw (3.5,-7) node{\includegraphics[bb=0 0 814 696,clip,width=3.5cm]{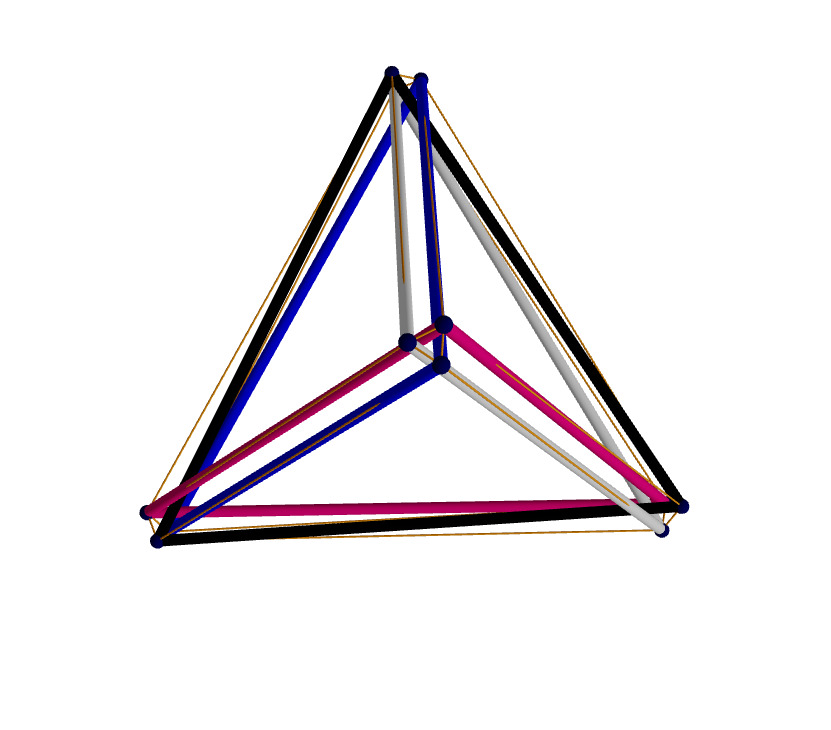}};
   \draw (3.5,-8.5) node {$x=0.9$};
   \draw (0,-7) node{\includegraphics[bb=0 0 814 696,clip,width=3.5cm]{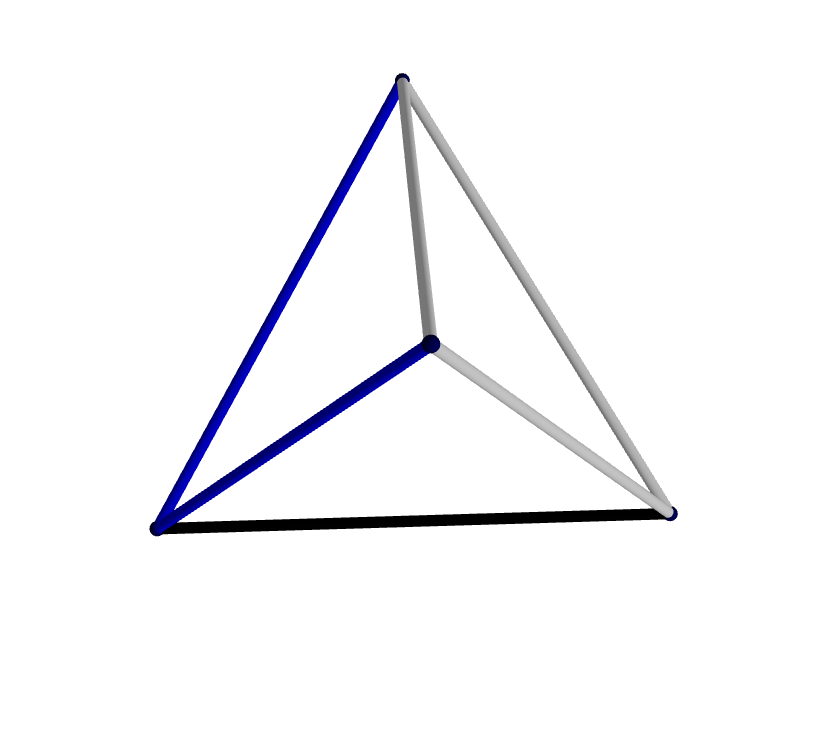}};
   \draw (0,-8.5) node {$x=1$};
  \end{tikzpicture}
  \caption{Deformation of the tensegrity as $x$ varies from $0$ to $1$.}
  \label{fig:deform}

 \end{figure}
\end{center}

\begin{figure}[H]
\begin{center}
\tdplotsetmaincoords{60}{120}
\begin{tikzpicture}[scale=2, tdplot_main_coords]
\draw[ultra thick, blue] (1, 0, -1) node[right]{$\rho(cs^2){\mathbf p}_0$} -- (0,-1,1) node[left]{$\rho(cs){\mathbf p}_0$} 
 -- (-1,1,0) node[right]{$\rho(c){\mathbf p}_0$} -- cycle;  
\draw[ultra thick, red] (0,1,1)node[right]{${\mathbf p}_0$} -- (1,-1,0) node[left]{$\rho(s){\mathbf p}_0$}-- (-1,0,-1) -- (0,1,1);
\fill[opacity=1,white] (0,1,1) -- (1,-1,0) -- (-1,0,-1) -- (0,1,1);

\draw[ultra thick, red] (0, 1, 1) -- (-1,0,-1) node[right]{$\rho(s^2){\mathbf p}_0$};  
\draw[ultra thick, red] (1, -1, 0) -- (-1,0,-1);  
\draw[ultra thick, blue] (-1, 1, 0) -- (0,-1,1);
\draw[ultra thick, blue] (0,-1,1) -- ({1/2}, {-1/2},0);
\fill[opacity=0.1,blue] (0,1,1) -- (1,-1,0) -- (-1,0,-1) -- (0,1,1);
\end{tikzpicture}
 \caption{$\Delta$ and $\Delta'$ for $(x,y)=\left(\frac{1}{2}, -\frac{1}{3}\right)$.}
 \label{fig:deltadelta}
\end{center}
\end{figure}
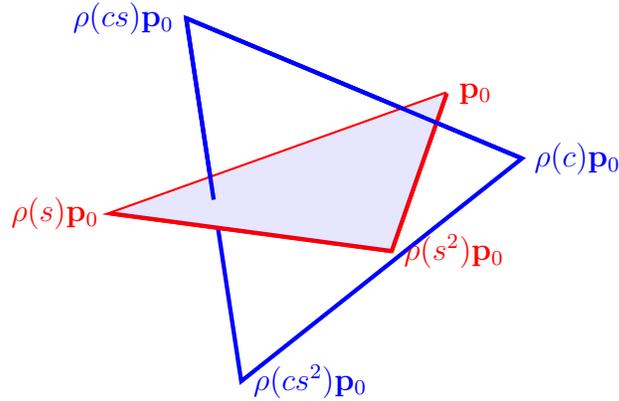

Then, we consider the link structure
of the four triangles composed of struts.
More precisely, we show that
every pair of the two triangles forms a Hopf link.
Let $\Delta$ be the triangle composed of
the struts connecting the nodes
$
{\mathbf p}_0, ~ \rho(s){\mathbf p}_0, \rho(s^2){\mathbf p}_0,$
and
let $\Delta'$ be the triangle composed of
the struts connecting the nodes
$\rho(c_1){\mathbf p}_0, ~ \rho(c_1s){\mathbf p}_0, \rho(c_1s^2){\mathbf p}_0.$
Figure $\ref{fig:deltadelta}$ illustrates the relative position
of $\Delta$ and $\Delta'$ for $(x,y)=\left(\frac{1}{2}, -\frac{1}{3}\right)$,
which is rigorously stated and proved as follows.
%

\begin{prop}
\label{prop:onehopflink}
At the distinguished point
$
(x,y)=\left(\frac12,-\frac13\right),
$
the two triangular strut components
\[
\Delta=\{{\mathbf p}_0,\rho(s){\mathbf p}_0,\rho(s^2){\mathbf p}_0\}
\quad\text{and}\quad
\Delta'=\{\rho(c_1){\mathbf p}_0,\rho(c_1s){\mathbf p}_0,\rho(c_1s^2){\mathbf p}_0\}
\]
form a Hopf link.
\end{prop}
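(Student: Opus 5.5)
The plan is to compute everything explicitly at the distinguished point and then read the linking off by a disk-intersection (linking number) argument. Substituting $(x,y)=\left(\frac12,-\frac13\right)$ into the formula for $\mathbf p_0$ should give $\mathbf p_0=\frac53(0,1,1)^{T}$. Since scaling does not affect linking, I would replace it by $(0,1,1)^T$. Applying $\rho(s)=\rho(g_1)$ repeatedly gives the vertices of $\Delta$:
\[
(0,1,1),\quad (1,-1,0),\quad (-1,0,-1).
\]
Applying $\rho(c_1)$ to these three vectors gives the vertices of $\Delta'$:
\[
(-1,1,0),\quad (1,0,-1),\quad (0,-1,1).
\]
This agrees with Figure~\ref{fig:deltadelta}. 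In each triangle the vertices sum to zero, so both triangles are centred at the origin. Their planes are $P=\{v: v\cdot(1,1,-1)=0\}$ and $P'=\{v: v\cdot(1,1,1)=0\}$, which are distinct.

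Next I would intersect both filled triangles with the line $L=P\cap P'$. Its direction is $(1,1,-1)\times(1,1,1)\parallel(1,-1,0)$, so I parametrize $L$ as $t\,(1,-1,0)$. The line $L$ passes through the vertex $(1,-1,0)$ of $\Delta$ (at $t=1$) and through the midpoint $\left(-\frac12,\frac12,0\right)$ of the opposite edge (at $t=-\frac12$). Hence the filled $\Delta$ meets $L$ in $t\in\left[-\frac12,1\right]$. Similarly, $L$ passes through the vertex $(-1,1,0)$ of $\Delta'$ (at $t=-1$) and the midpoint $\left(\frac12,-\frac12,0\right)$ of the edge joining $(1,0,-1)$ and $(0,-1,1)$ (at $t=\frac12$). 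Hence the filled $\Delta'$ meets $L$ in $t\in\left[-1,\frac12\right]$. The endpoints interleave strictly:
\[
-1<-\tfrac12<\tfrac12<1 .
\]
This interleaving is the key fact to establish.

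From the interleaving I would deduce three things. First, the boundary curves are disjoint. Any common point would lie on $L$ and be an endpoint of both intervals, but the four endpoints are distinct. Second, I compute the linking number as the signed count of intersections of the polygon $\partial\Delta$ with the flat disk bounded by $\Delta'$. Such intersections lie on $L$ at the two points where $\partial\Delta$ meets $L$, namely $t=-\frac12$ and $t=1$. Only $t=-\frac12$ lies in $\left[-1,\frac12\right]$, and there the edge of $\Delta$ crosses $P'$ transversally, since $\Delta$ is not contained in $P'$. So there is exactly one transverse crossing, and $\mathrm{lk}(\Delta,\Delta')=\pm1$. Third, to upgrade this to ``Hopf link'', I would use that both components are planar triangles, hence unknots.

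The main obstacle is the last point: linking number $\pm1$ alone does not determine the link type in general, so I need a concrete isotopy. The plan is as follows. Since $\partial\Delta$ pierces the disk of $\Delta'$ exactly once, I shrink $\Delta$ within its plane $P$ toward a small neighbourhood of the piercing point, keeping it disjoint from $\partial\Delta'$. This reduces to a tiny loop in $P$ linking the edge of $\Delta'$ once, which is a standard Hopf link configuration. Alternatively, I would invoke the elementary fact that two triangles in $\mathbb R^3$ whose intersection segments on $L$ strictly interleave form a Hopf link.
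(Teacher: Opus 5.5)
Your proof is correct. It differs from the paper's mainly in how it is organized and in its final step.

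The paper counts intersections in the other direction. It writes the midpoint $(\frac12,-\frac12,0)$ of the edge $EF$ of $\Delta'$ as a strict convex combination of the vertices of $\Delta$. It then uses the linear form $X+Y-Z$ to see that $DE$ and $DF$ meet the plane of $\Delta$ only at $D$, which lies outside $\Delta$. So $\partial\Delta'$ crosses the disk of $\Delta$ exactly once.

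Your reduction to the common line $L=P\cap P'$ and the strict interleaving $-1<-\frac12<\frac12<1$ is more symmetric. It yields disjointness of the two boundaries and both single-crossing counts at once. More importantly, the paper passes directly from $|\operatorname{lk}(\Delta,\Delta')|=1$ and unknotted components to ``Hopf link''. As you point out, that inference is not valid in general, so your isotopy step is what actually closes the argument.

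That step needs one correction, though. You propose shrinking $\Delta$ in $P$ toward the point where $\partial\Delta$ pierces the disk of $\Delta'$, namely $(-\frac12,\frac12,0)$. But that point lies on $\partial\Delta$ itself. The homothety of $\Delta$ centred there with ratio $\frac23$ sends $B$ to $(\frac12,-\frac12,0)\in EF$, so this shrinking collides with $\partial\Delta'$.

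Your interleaving already tells you that $\partial\Delta'\cap P=\{D,(\frac12,-\frac12,0)\}$, with only the second point inside $\Delta$. So you should shrink $\Delta$ by homotheties centred at $(\frac12,-\frac12,0)$. The images are nested in $\Delta$ and contain that point in their interiors, hence they avoid $\partial\Delta'$. The end result is a small meridian loop around the transverse edge $EF$ of the unknot $\Delta'$, which is a Hopf link.

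Equivalently, you can keep your own crossing count and shrink $\Delta'$ inside $P'$ toward $(-\frac12,\frac12,0)$, using $\partial\Delta\cap P'=\{B,(-\frac12,\frac12,0)\}$.
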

\begin{proof}
At the distinguished point \((x,y)=\left(\frac12,-\frac13\right)\), the general expression for the null vector gives
\[
{\mathbf p}_0=\left(0,\frac53,\frac53\right).
\]
Since the realization is unchanged up to an overall scalar multiple, we normalize and write
\[
{\mathbf p}_0=(0,1,1).
\]
Let
\[
A={\mathbf p}_0=
\begin{pmatrix}
0\\ 1\\ 1
\end{pmatrix},\qquad
B=\rho(s){\mathbf p}_0=
\begin{pmatrix}
1\\ -1\\ 0
\end{pmatrix},\qquad
C=\rho(s^2){\mathbf p}_0=
\begin{pmatrix}
-1\\ 0\\ -1
\end{pmatrix},
\]
so that
\[
\Delta=\triangle ABC.
\]
Similarly, let
\[
D=\rho(c_1){\mathbf p}_0=
\begin{pmatrix}
-1\\ 1\\ 0
\end{pmatrix},\qquad
E=\rho(c_1s){\mathbf p}_0=
\begin{pmatrix}
1\\ 0\\ -1
\end{pmatrix},\qquad
F=\rho(c_1s^2){\mathbf p}_0=
\begin{pmatrix}
0\\ -1\\ 1
\end{pmatrix},
\]
so that
\[
\Delta'=\triangle DEF.
\]

The plane containing \(\Delta\) is
\[
\Pi:\ X+Y-Z=0.
\]
Indeed, the three vertices \(A,B,C\) satisfy
\[
0+1-1=0,\qquad 1-1-0=0,\qquad -1+0-(-1)=0.
\]

We claim that the edge \(EF\) meets the interior of the triangular disk bounded by \(\Delta\).
Its midpoint is
\[
\frac12E+\frac12F
=
\frac12
\begin{pmatrix}
1\\ -1\\ 0
\end{pmatrix}
=
\begin{pmatrix}
\frac12\\ -\frac12\\ 0
\end{pmatrix},
\]
and this point lies on \(\Pi\), since
\[
\frac12-\frac12-0=0.
\]
Moreover,
\[
\frac12E+\frac12F
=
B+\frac16(A-B)+\frac16(C-B).
\]
Since
\[
\frac16>0,\qquad
1-\frac16-\frac16=\frac23>0,
\]
this point is a strict convex combination of \(A,B,C\). Hence it lies in the interior of the triangular disk bounded by \(\Delta\). Therefore the edge \(EF\) intersects the interior of that disk.
In particular, the intersection point lies on the symmetry axis of the equilateral triangle 
$\Delta$ through the vertex B.

Next we show that the other two edges of \(\Delta'\) do not meet the triangular disk bounded by \(\Delta\).
First,
\[
D = B + \frac23(A-B) +\frac23(C-B),
\]
so \(D\in \Pi\), but \(D\) lies outside \(\Delta\), because one barycentric coefficient is negative.

Now consider the function
\[
\ell(X,Y,Z)=X+Y-Z.
\]
Since \(\Pi\) is given by \(\ell=0\), we have
\[
\ell(D)= -1+1-0=0,\qquad
\ell(E)=1+0-(-1)=2,\qquad
\ell(F)=0+(-1)-1=-2.
\]
Thus the segment \(DE\) meets the plane \(\Pi\) only at the endpoint \(D\), and likewise the segment \(DF\) meets \(\Pi\) only at the endpoint \(D\). Since \(D\) lies outside \(\Delta\), neither \(DE\) nor \(DF\) intersects the triangular disk bounded by \(\Delta\).

Consequently, the closed polygon \(\Delta'\) meets the triangular disk bounded by \(\Delta\) in exactly one point, namely the interior point of the edge \(EF\) described above. Therefore
\[
\bigl|\operatorname{lk}(\Delta,\Delta')\bigr|=1.
\]
Since both \(\Delta\) and \(\Delta'\) are embedded triangles, each is an unknot. Hence \(\Delta\) and \(\Delta'\) form a Hopf link.
\end{proof}

\begin{thm}
At the distinguished point
$
(x,y)=\left(\frac12,-\frac13\right),
$
the four triangular strut components are pairwise Hopf linked.
\end{thm}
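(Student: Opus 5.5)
The plan is to reduce everything to Proposition~\ref{prop:onehopflink} using the $A_4$-symmetry, so that no new intersection computation is needed. Let $H=\langle s\rangle\subset A_4$. The four strut triangles are indexed by the left cosets $gH$: the triangle $T_{gH}$ has vertices $\rho(g)\mathbf p_0,\rho(gs)\mathbf p_0,\rho(gs^2)\mathbf p_0$. Strut edges join $\rho(h)\mathbf p_0$ and $\rho(hs)\mathbf p_0$. Hence for any $k\in A_4$ the linear map $\rho(k)$ sends $T_{gH}$ onto $T_{kgH}$, vertices to vertices and edges to edges. In this notation $\Delta=T_H$ and $\Delta'=T_{c_1H}$.

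\textbf{Step 1: the four triangles are distinct and disjoint.} At $(x,y)=(\tfrac12,-\tfrac13)$ the orbit of $\mathbf p_0=(0,1,1)$ is the set of twelve vectors obtained from $(0,\pm1,\pm1)$ by the cyclic coordinate permutations with the sign pattern allowed by $\rho$. These are the twelve distinct vertices of the cuboctahedron. So the orbit map $g\mapsto\rho(g)\mathbf p_0$ is injective, and the four triangles have pairwise disjoint vertex sets. Disjointness of the closed triangles as subsets of $\mathbb R^3$ also follows. By symmetry, as in Step 2, it suffices to note that $\Delta\cap\Delta'=\emptyset$. This is visible in the proof of Proposition~\ref{prop:onehopflink}: $\Delta'$ meets the plane $\Pi$ only at $D$, which lies outside $\Delta$, and at the midpoint of $EF$, which is not on an edge of $\Delta$.

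\textbf{Step 2: transitivity on pairs.} The left-multiplication action of $A_4$ on the four cosets of $H$ is the natural action of $A_4$ on $\{1,2,3,4\}$, where $H$ is the stabilizer of $4$. This action is $2$-transitive, and in particular transitive on unordered pairs. So for any two distinct triangles $T_{gH},T_{g'H}$ there is $k\in A_4$ with $\{kH,kc_1H\}=\{gH,g'H\}$. Here $H\neq c_1H$, since $c_1\notin H$. Then $\rho(k)$ maps the pair $(\Delta,\Delta')$ onto the pair $(T_{gH},T_{g'H})$.

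\textbf{Step 3: invariance of the link type.} Each $\rho(k)$ is an integral matrix of determinant $1$; one checks $\det\rho(g_1)=\det\rho(g_2)=\det\rho(g_3)=1$. It is also orthogonal, since it is a signed permutation matrix. So $\rho(k)$ is a rotation of $\mathbb R^3$, an orientation-preserving homeomorphism. It therefore carries the two-component link $\Delta\cup\Delta'$ to an isotopic link, and it preserves $|\operatorname{lk}|$. By Proposition~\ref{prop:onehopflink}, every pair $T_{gH}\cup T_{g'H}$ is a Hopf link. That proves the theorem.

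\textbf{Main obstacle.} The only delicate point is Step 2: one must verify correctly that $\rho(k)$ acts on triangles by left multiplication on cosets. This holds because struts are defined by right multiplication by $s$ while $\rho(k)$ acts on the left. One must also verify that this action realizes every pair. If one prefers to avoid the $2$-transitivity argument, the fallback is to list the six pairs explicitly and exhibit the group element for each. Alternatively, one can simply repeat the plane-crossing computation of Proposition~\ref{prop:onehopflink} for the remaining five pairs using the explicit cuboctahedron coordinates.
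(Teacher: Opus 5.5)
Your proposal is correct and follows the paper's argument: it reduces to Proposition~\ref{prop:onehopflink} using the transitivity of the $A_4$-action on unordered pairs of strut triangles. You also supply details the paper leaves implicit: the indexing of triangles by cosets of $\langle s\rangle$, the $2$-transitivity of that action, the fact that each $\rho(k)$ is a rotation, and the disjointness of the triangles. All of these check out.
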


\begin{proof}
By Proposition \ref{prop:onehopflink},
the representative pair \(\Delta\) and \(\Delta'\) forms a Hopf link.
Since the action of \(A_4\) on the four triangular components is transitive on unordered pairs, every pair of distinct components is equivalent to this one. Hence every two distinct triangular components form a Hopf link.
\end{proof}

\section{Persistence of the Hopf link}

In the previous section we proved that,
at the distinguished point
$(x,y)=\left(\frac12,-\frac13\right)$,
the four triangular strut components form a four-component mutual Hopf link.
In this section we show that this link structure persists
for all realizable configurations with $0<x<1$.

\subsection{Intersection parameters}

Let $\Delta$ and $\Delta'$ be the two triangular components
defined in Section~2.
To determine whether the strut in $\Delta'$ connecting
$\rho(c_1s){\mathbf p}_0$ and $\rho(c_1s^2){\mathbf p}_0$
intersects the triangular disk bounded by $\Delta$,
we consider the equation
\[
 \tau\rho(c_1s){\mathbf p}_0
 +
 (1-\tau)\rho(c_1s^2){\mathbf p}_0
 =
 \rho(s){\mathbf p}_0
 +
 R_1({\mathbf p}_0 - \rho(s){\mathbf p}_0 )
 +
 R_2(\rho(s^2){\mathbf p}_0  - \rho(s){\mathbf p}_0).
\]

Solving the intersection equation yields the parameters
\[
\tau=\tau(x,y), \qquad
R_1=R_1(x,y), \qquad
R_2=R_2(x,y),
\]
which are rational functions on the curve \(d(x,y)=0\).

For the parameter \(\tau\) we obtain
\[
\tau(x,y)=\frac{N_\tau(x,y)}{D_\tau(x,y)},
\]
where
\[
N_\tau(x,y)=(-x+y)(2x+3y+1),
\qquad
D_\tau(x,y)=2xy-5y-3 .
\]
Moreover,
\[
R_1(x,y)=\frac{N_1(x,y)}{D(x,y)},\qquad
R_2(x,y)=\frac{N_2(x,y)}{D(x,y)},
\]
where
\[
D(x,y)=(2xy-5y-3)P(x,y),
\]
\[
N_1(x,y)=(-2x+y+3)(x-1)Q_1(x,y),
\]
\[
N_2(x,y)=-(2x^2+xy-5x-4y)Q_2(x,y).
\]

Here
\[
P(x,y)=
4 x^{4} + 3 x^{2} y^{2} - 20 x^{3} - 6 x^{2} y - 15 x y^{2}
 + 31 x^{2} + 6 x y + 21 y^{2} - 15 x + 18 y + 9 .
\]

The explicit forms of the polynomials \(Q_1\) and \(Q_2\)
are recorded in Appendix~A.
Figure ${\ref{fig:tau}}$ shows numerical plots
of $\tau$ and $(R_1, R_2)$ for $0 < x < 1$.

\begin{figure}[H]
  \centering
  \begin{tikzpicture}[scale=3,
    declare function={
      g(\x) = 1/6*\x*\x - 1/6*\x + 1/6*sqrt(\x^4 - 2*\x^3 + 31*\x^2 - 30*\x + 9) - 1/2;
      den(\x,\y) = 8*\x^5*\y + 6*\x^3*\y^3 - 60*\x^4*\y - 12*\x^3*\y^2 - 45*\x^2*\y^3 - 12*\x^4 + 162*\x^3*\y + 33*\x^2*\y^2 + 117*\x*\y^3 + 60*\x^3 - 167*\x^2*\y + 51*\x*\y^2 - 105*\y^3 - 93*\x^2 + 75*\x*\y - 153*\y^2 + 45*\x - 99*\y - 27;
      numR1(\x,\y) = 8*\x^6 + 20*\x^5*\y - 14*\x^4*\y^2 - 17*\x^3*\y^3 + 9*\x^2*\y^4 - 36*\x^5 - 132*\x^4*\y + 39*\x^3*\y^2 + 108*\x^2*\y^3 - 36*\x*\y^4 + 18*\x^4 + 285*\x^3*\y + 66*\x^2*\y^2 - 210*\x*\y^3 + 27*\y^4 + 85*\x^3 - 200*\x^2*\y - 220*\x*\y^2 + 119*\y^3 - 111*\x^2 - 18*\x*\y + 129*\y^2 + 36*\x + 45*\y;
      numR2(\x,\y) = -8*\x^6 + 4*\x^5*\y + 30*\x^4*\y^2 - 5*\x^3*\y^3 - 9*\x^2*\y^4 + 36*\x^5 - 171*\x^3*\y^2 - 15*\x^2*\y^3 + 54*\x*\y^4 - 54*\x^4 - 87*\x^3*\y + 297*\x^2*\y^2 + 135*\x*\y^3 - 72*\y^4 + 23*\x^3 + 239*\x^2*\y - 93*\x*\y^2 - 196*\y^3 + 48*\x^2 - 147*\x*\y - 144*\y^2 - 45*\x - 36*\y;
      R1(\x,\y) = numR1(\x,\y) / den(\x,\y);
      R2(\x,\y) = numR2(\x,\y) / den(\x,\y);
    }
  ]
    \draw (0,0) node[below left] {$0$};
    \draw[->,>=latex] (-0.3,0) -- (1.3,0) node[right]{$x$};
    \draw[->,>=latex] (0,-0.3) -- (0,1.3) node[above]{$\tau(x,y)$};
    \draw[dashed] (0,1) node[left]{$1$} -- (1,1) -- (1,0) node[below]{$1$};
    
    \draw [domain=0:1, blue, line width=1, samples=50] plot (\x, 
      {(-\x + g(\x)) * (2*\x + 3*g(\x) + 1) / (2*\x*g(\x) - 5*g(\x) - 3)}
    );
   \draw[red,fill = red] ({1/2},{1/2}) circle (0.015);

    \begin{scope}[xshift=2cm,scale=3]
      \draw (0,0) node[below left] {$0$};
      \draw[->,>=latex] (-0.1,0) -- (0.4,0) node[right]{$R_1$};
      \draw[->,>=latex] (0,-0.1) -- (0,0.4) node[above]{$R_2$};
     \draw[dashed] (0,{1/6}) node[left]{$\frac{1}{6}$}-- ({1/6},{1/6}) -- ({1/6},0) node[below]{$\frac{1}{6}$};
     \draw[line width=1pt, blue]
     plot coordinates {
     (0.006622864, 0.006801155)
     (0.013161118, 0.013875608)
     (0.019620334, 0.021228980)
     (0.026007760, 0.028865308)
     (0.032332645, 0.036786375)
     (0.038606612, 0.044990834)
     (0.044844059, 0.053473075)
     (0.051062576, 0.062221782)
     (0.057283334, 0.071218098)
     (0.063531393, 0.080433354)
     (0.069835814, 0.089826270)
     (0.076229417, 0.099339644)
     (0.082747948, 0.108896545)
     (0.089428316, 0.118396190)
     (0.096305472, 0.127709903)
     (0.103407404, 0.136677836)
     (0.110747784, 0.145107569)
     (0.118315982, 0.152776166)
     (0.126064753, 0.159437594)
     (0.133896966, 0.164837336)
     (0.141654188, 0.168735110)
     (0.149111470, 0.170934488)
     (0.155983364, 0.171315042)
     (0.161944828, 0.169859444)
     (0.166666667, 0.166666667)
     (0.169859444, 0.161944828)
     (0.171315042, 0.155983364)
     (0.170934488, 0.149111470)
     (0.168735110, 0.141654188)
     (0.164837336, 0.133896966)
     (0.159437594, 0.126064753)
     (0.152776166, 0.118315982)
     (0.145107569, 0.110747784)
     (0.136677836, 0.103407404)
     (0.127709903, 0.096305472)
     (0.118396190, 0.089428316)
     (0.108896545, 0.082747948)
     (0.099339644, 0.076229417)
     (0.089826270, 0.069835814)
     (0.080433354, 0.063531393)
     (0.071218098, 0.057283334)
     (0.062221782, 0.051062576)
     (0.053473075, 0.044844059)
     (0.044990834, 0.038606612)
     (0.036786375, 0.032332645)
     (0.028865308, 0.026007760)
     (0.021228980, 0.019620334)
     (0.013875608, 0.013161118)
     (0.006801155, 0.006622864)
     (0.000000000, 0.000000000)
     };
     \draw[red,fill = red] ({1/6},{1/6}) circle (0.005);
    \end{scope}
  \end{tikzpicture}
  \caption{Numerical plots of $\tau(x,y)$ and $(R_1(x,y),R_2(x,y))$ along the arc of
 \eqref{eq:detOmega-affine} with $0<x<1$ containing the distinguished point $(\frac12,-\frac13)$ (right).
 Red points indicate values corresponding to $(x,y)=(\frac{1}{2}, -\frac13)$}
 \label{fig:tau}
\end{figure}
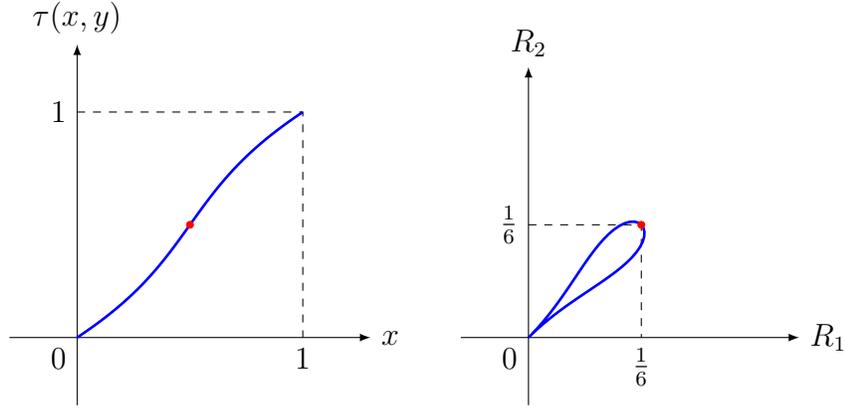

To determine the possible zeros and poles of these parameters
along the curve \(d(x,y)=0\),
we compute resultants with respect to \(y\).

For the parameter \(\tau\) we obtain
\[
\operatorname{Res}_y(d,N_\tau)
=
-384\,x(x-3)(x+2)(x-1)^3,
\]
\[
\operatorname{Res}_y(d,D_\tau)
=
48\,(x-3)(2x-1)(x-1)^2 .
\]

Hence the only possible zeros of \(\tau\) on the curve \(d(x,y)=0\)
occur at \(x=0\) or \(x=1\),
while the only possible pole in the interior of the interval
\(0<x<1\) occurs at \(x=\frac12\).

Similarly, the resultants for \(R_1\) and \(R_2\)
show that their possible zeros on the curve \(d(x,y)=0\)
occur only at the boundary points \(x=0\) and \(x=1\).

For instance,
\[
\operatorname{Res}_y(d,N_1)
=
98304\,x(x-3)^4(x-1)^7(3x^4-20x^3+7x^2+4x-3),
\]
and the remaining resultants are similar.

At the distinguished point$(x,y)=\left(\frac12,-\frac13\right)$,
a direct computation gives
\[
\tau=\frac12,\qquad
R_1=R_2=\frac16 .
\]

Therefore the intersection point lies strictly inside
the triangular disk bounded by \(\Delta\),
and the same holds for all other pairs by symmetry.

The strut intersects the interior of the triangular disk bounded by $\Delta$
if and only if
\[
0<\tau<1, \qquad
R_1>0, \qquad
R_2>0, \qquad
R_1+R_2<1.
\]

%
%
%
%

\subsection{Sign determination}

The elliptic curve contains the distinguished point
$
(x,y)=\left(\frac12,-\frac13\right),
$
which corresponds to the cuboctahedral configuration.
At this point we have
\[
\tau=\frac12,
\qquad
R_1=R_2=\frac16,
\qquad
1-R_1-R_2=\frac23.
\]

Hence all four quantities
\[
\tau,\qquad 1-\tau,\qquad R_1,\qquad R_2,\qquad 1-R_1-R_2
\]
are strictly positive at this point.

Since the resultant computations show that none of these functions
has a zero or pole on the open interval $0<x<1$,
their signs remain unchanged along the corresponding real arc
of the elliptic curve.

Therefore, for every realizable configuration with $0<x<1$,
we have
\[
0<\tau<1,
\qquad
R_1>0,
\qquad
R_2>0,
\qquad
R_1+R_2<1.
\]

Consequently the relevant strut intersects the interior
of the triangular disk bounded by $\Delta$.

\begin{thm}
For every realizable configuration with $0<x<1$,
the four triangular strut components of the tensegrity
are pairwise Hopf linked.
\end{thm}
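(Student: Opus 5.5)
The plan is to deduce the statement from Proposition~\ref{prop:onehopflink} (Hopf linking at $(x,y)=\left(\frac12,-\frac13\right)$) by an isotopy argument along the real arc of $d(x,y)=0$ over $0<x<1$. First I check that this arc is the graph of the continuous function
\[
y=g(x)=\tfrac16x^2-\tfrac16x+\tfrac16\sqrt{x^4-2x^3+31x^2-30x+9}-\tfrac12 ,
\]
so that $x\mapsto \mathbf p_0(x,g(x))$ is a continuous path. Next I verify that the configuration stays nondegenerate along it. For this I show $\mathbf p_0\neq\mathbf 0$, and that the three vertices of each strut triangle are never collinear. Collinearity amounts to the vanishing of the cross product $(\rho(s)\mathbf p_0-\mathbf p_0)\times(\rho(s^2)\mathbf p_0-\mathbf p_0)$. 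I treat it, and $\mathbf p_0=\mathbf 0$, by taking resultants with $d$ with respect to $y$ and checking that there are no real roots in $(0,1)$ on the chosen branch. As a result, $\Delta$ and $\Delta'$ are embedded triangles varying continuously with $x$.

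The key step is to show that $\Delta\cap\Delta'=\emptyset$ for every $x\in(0,1)$. Granting this, the family is a continuous motion of a two-component polygonal link through embeddings with a fixed number of straight edges. Such a motion extends to an ambient isotopy, for instance by the standard PL isotopy extension, or simply because the linking number is constant and is determined by signed crossings of one triangle with the flat disk of the other. Hence the link type at every $x$ agrees with that at $x=\frac12$, which is the Hopf link by Proposition~\ref{prop:onehopflink}.

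To prove disjointness, I note that the two triangles can meet only where an edge of $\Delta'$ meets an edge of $\Delta$. Such a meeting forces the four endpoints involved to be coplanar. So for each of the nine pairs of edges I form the $4\times4$ coplanarity determinant as a polynomial in $(x,y)$ and take its resultant with $d$ with respect to $y$. At each root in $(0,1)$ that lies on the relevant branch, I check directly that the segments do not actually cross. For the edge $EF$ this is already available from the sign determination above. There, $EF$ meets the plane $\Pi$ of $\Delta$ in a single point with $0<\tau<1$, $R_1>0$, $R_2>0$ and $R_1+R_2<1$, which is strictly inside the disk, so $EF$ avoids the boundary $\Delta$. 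For the edges $DE$ and $DF$ I would set up the analogous intersection parameters $(\tau,R_1,R_2)$ against $\Pi$. I would then show, by the same resultant method, that whenever $0\le\tau\le1$ the point is never on $\partial\Delta$. At $x=\frac12$ these parameters degenerate, since $D\in\Pi$, so a careful local check is needed there.

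The main obstacle is this disjointness verification for $DE$ and $DF$, together with the apparent pole of $\tau$ at $x=\frac12$ coming from $D_\tau$. I would first try to reduce the work by symmetry. A double transposition in $A_4$ swaps the two triangles, and I would look for a group element mapping the edges $DE$ and $DF$ onto edges already controlled by the $EF$ computation. If that fails, I would compute the remaining resultants explicitly, and handle $x=\frac12$ by exact evaluation at the rational point, as in the proof of Proposition~\ref{prop:onehopflink}. Finally, since $A_4$ is transitive on unordered pairs of triangles, Hopf linking of $\Delta,\Delta'$ for all $x\in(0,1)$ gives the pairwise statement.
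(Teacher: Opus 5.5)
Your plan is correct, but you carry a different invariant along the arc than the paper does.

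\textbf{The paper's route.} The paper follows only the strut $EF$. Resultants show that $\tau$, $1-\tau$, $R_1$, $R_2$, $1-R_1-R_2$ keep their signs for $0<x<1$, so $EF$ always pierces the open disk of $\Delta$. From this the paper concludes that ``the linking type cannot change.'' It never examines $DE$ and $DF$, the disjointness $\Delta\cap\Delta'=\emptyset$, nondegeneracy, or continuity of the branch.

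\textbf{Your route.} Taking disjointness as the invariant gives an actual isotopy. That supplies exactly the justification the paper leaves implicit, but it costs extra resultant work for $DE$, $DF$ and the nine edge pairs.

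\textbf{A shortcut that removes the extra work.} The symmetry you look for does not exist: no element of $A_4$ carries $DE$ or $DF$ onto $EF$ while preserving $\Delta$. The stabilizer of $\{\Delta,\Delta'\}$ is only $\{e,\sigma\}$, where $\sigma=sc_1^{-1}$ is the double transposition exchanging the triangles, with $\rho(\sigma)=\mathrm{diag}(-1,-1,1)$. However, $\sigma$ transports the paper's conclusion to a second fact: the edge $\sigma(EF)=CA$ of $\Delta$ pierces the open disk of $\Delta'$.

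Transversality of $EF$ to $\Pi$ makes $L=\Pi\cap\Pi'$ a line. Consider the chords $I=\operatorname{conv}(\Delta)\cap L$ and $I'=\operatorname{conv}(\Delta')\cap L$.
\begin{itemize}
\item Their endpoint sets are $\Delta\cap L$ and $\Delta'\cap L$.
\item By the two piercing facts, each chord has an endpoint in the open interior of the other.
\item On a line, this forces the four endpoints to be distinct and interleaved.
\end{itemize}
Hence $\Delta\cap\Delta'=\emptyset$, and $\Delta'$ meets $\operatorname{conv}(\Delta)$ only at the single transverse point on $EF$. This gives $|\operatorname{lk}(\Delta,\Delta')|=1$ directly at every $x$. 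Shrinking $\Delta$ inside its disk toward that point then shows the link is Hopf. No isotopy-extension step and no further resultants are needed.

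\textbf{Two of your concerns dissolve.}
\begin{itemize}
\item The root $x=\tfrac12$ of $\operatorname{Res}_y(d,D_\tau)$ comes from $(\tfrac12,-\tfrac34)$ on the other branch, since $D_\tau(\tfrac12,-\tfrac13)=-\tfrac53\neq0$.
\item Nothing degenerates when $D\in\Pi$. The second crossing of $\Delta'$ with $\Pi$ simply passes through the vertex $D$ from one edge to the other.
\end{itemize}
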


\begin{proof}
At the cuboctahedral point the statement holds
by Theorem~2.
The analysis above shows that the intersection pattern
of the struts with the triangular disks
remains unchanged along the realizable family with $0<x<1$.
Hence the linking type cannot change.
Therefore the four triangular components remain
pairwise Hopf linked.
\end{proof}

\subsection{Arithmetic of the spectral curve}
For background on the arithmetic of elliptic curves, see \cite{Silverman}.
The condition that the stress matrix has a nontrivial kernel leads to the plane cubic
\begin{equation}
\label{eq:spectral_cubic}
x^2y + 3x^2 - xy - 3y^2 - 3x - 3y = 0 .
\end{equation}
This curve is nonsingular and therefore defines an elliptic curve over $\mathbb{Q}$.
Using SageMath, one finds that it is birational over $\mathbb{Q}$ to the elliptic curve
with Cremona label $30\mathrm{a}2$ in the LMFDB database~\cite{LMFDB}. 

A Weierstrass model of the curve is
\[
E:\quad Y^2 + 2XY + 72Y = X^3 + 48X^2 + 432X .
\]
This curve is isomorphic over $\mathbb{Q}$ to the short Weierstrass model
\begin{equation}
\label{eq:short_weierstrass}
E_0:\quad V^2 = U^3 - 384048\,U + 82988928 .
\end{equation}
The corresponding elliptic curve has Cremona label $30\mathrm{a}2$.
In particular, it has Mordell--Weil rank $0$ over $\mathbb{Q}$ and torsion subgroup
\[
E_0(\mathbb{Q})_{\mathrm{tors}} \cong \mathbb{Z}/2\mathbb{Z} \times \mathbb{Z}/6\mathbb{Z}.
\]

Explicitly, the birational transformation from the affine part of
\eqref{eq:spectral_cubic} to the Weierstrass model \eqref{eq:short_weierstrass} is given by
\[
U = \frac{1176x - 468y - 468}{2x - 3y - 3}, \qquad
V = \frac{15552(x + y - 1)}{2x - 3y - 3}.
\]
For example, the distinguished point $(x,y)=(\tfrac12,-\tfrac13)$ on
\eqref{eq:spectral_cubic} corresponds to the point
\[
(U,V)=(-276,12960)
\]
on the elliptic curve $E_0$.

\begin{rmk}
 The rational points on the curve $d(x,y)=0$
 are 
 $$(1,0),\ (3,3),\ \left(\frac12,-\frac13\right),\ (0,0),\ (-2,3),\ (3,-2),\ (1,-1),\ \left(\frac12,-\frac34\right),\ (-2,-2),$$ 
 together with the points at infinity
 $$(0:1:0), \quad (1:0:0), \quad (0:-1:1).$$
 Their x-coordinates are
 $$x = 0, \frac{1}{2}, 1, -2, 3.$$
 Remarkably, the resultant factorizations computed in Section 3.1 yield roots at precisely
 $$x = 0, 1, -2, 3,$$
 that is, at the x-coordinates of all rational points except 
 $x=\frac{1}{2}$. The exceptional value $x=\frac{1}{2}$
 corresponds to the cuboctahedral configuration — the unique physically realizable rational point.
 Thus the arithmetic of the elliptic curve appears to single out this configuration in a precise sense: it is the unique rational point whose x-coordinate does not appear as a root of any resultant. The mechanism behind this coincidence remains an open question.
\end{rmk}

%
%
%
%

\section{Trajectory of the strut--triangle intersection}
\label{sec:intersection}

In the configuration described above, each strut passes through
exactly one of the opposite triangular faces.
Let $(u,v)$ denote the barycentric coordinates of this intersection
point with respect to that triangle.

Using the parameters $(x,y)$ introduced in the previous section,
the intersection coordinates can be expressed as rational functions
\[
u = R_1(x,y), \qquad v = R_2(x,y).
\]

These two coordinates play symmetric roles,
since exchanging the two directions of the strut
interchanges the two coordinates.
Consequently the locus of $(u,v)$ is invariant under
\[
(u,v) \longmapsto (v,u).
\]

To exploit this symmetry we introduce the elementary symmetric
coordinates
\[
s = u+v, \qquad p = uv .
\]

Eliminating the parameters $(x,y)$ from the relations
\[
d(x,y)=0, \qquad
s = R_1(x,y)+R_2(x,y), \qquad
p = R_1(x,y)R_2(x,y),
\]
yields an algebraic relation
\[
G(s,p)=0.
\]
The polynomial $G(s,p)$ is explicitly presented in
Appendix $\ref{appendix:G}$.

Returning to the original coordinates we obtain a plane curve
\[
K(u,v)=G(u+v,uv)=0 .
\]

\begin{thm}
The locus of the strut--triangle intersection point $(u,v)$
is the symmetric septic curve
\[
K(u,v)=0 .
\]
This curve is invariant under the involution $(u,v)\mapsto(v,u)$.
It has singular points at
\[
(0,0), \qquad \left(\tfrac23,\tfrac23\right),
\]
and its geometric genus is equal to $1$.
\end{thm}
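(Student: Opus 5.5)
The plan is to treat the locus as the image of the elliptic curve $\mathcal{C}: d(x,y)=0$ under the rational map $\phi=(R_1,R_2)\colon \mathcal{C}\to\mathbb{A}^2$, and to read off every claimed property from this description.

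\emph{Step 1 (the curve $K=0$ is the Zariski closure of the image).} By construction $G(s,p)$ generates the elimination ideal of $d$, $s-R_1-R_2$, $p-R_1R_2$; one computes it by clearing the common denominator $D$ and taking resultants or a Gröbner basis. Then $K(u,v)=G(u+v,uv)$ vanishes on $\phi(\mathcal{C})$. Because $\mathcal{C}$ is irreducible and $\phi$ is nonconstant (Figure~\ref{fig:tau}), the closure of the image is an irreducible curve. I would factor $K$ over $\mathbb{Q}$, and then over $\overline{\mathbb{Q}}$ via an absolute irreducibility test, to check that $K$ is irreducible of degree $7$. This is needed because a priori $G(u+v,uv)$ could also contain the image of the swapped map $(R_2,R_1)$. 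The symmetry argument in the text shows that this swapped image is the same curve, so $K$ is irreducible and equals the closure of $\phi(\mathcal{C})$. Invariance under $(u,v)\mapsto(v,u)$ is then immediate, since $K$ is a polynomial in $u+v$ and $uv$.

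\emph{Step 2 (singular points).} I would solve $K=K_u=K_v=0$. Using the symmetry, it helps to restrict to the diagonal $u=v$, where $K_u=K_v$, and separately to off-diagonal pairs of points. A resultant computation locates all singular points, and one checks that the only ones in $\mathbb{A}^2$ are $(0,0)$ and $(\tfrac23,\tfrac23)$. For each of them I would compute the lowest-degree homogeneous part of $K$ at the point to get its multiplicity $m$, and I would perform Puiseux or blow-up analysis to obtain the delta invariant $\delta$.

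The point $(0,0)$ is the image of the boundary points $x=0$ and $x=1$, where $R_1$ and $R_2$ both vanish (the resultants in Section~3.1). Two branches meet there, so a node or tacnode-type singularity is expected. For $(\tfrac23,\tfrac23)$ I would locate its preimages on $\mathcal{C}$ over $\overline{\mathbb{Q}}$, expecting two distinct points with equal image. One must also inspect the points at infinity of the projective closure $\overline{K}$.

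\emph{Step 3 (genus).} The map $\phi$ has degree $1$ onto its image. To see this, compute the fibre over a generic rational point of $K$, or check that $\mathbb{Q}(\mathcal{C})=\mathbb{Q}(R_1,R_2)$ by expressing $x$ as a rational function of $R_1,R_2$ modulo $d$. Then $\phi$ is birational, and the geometric genus of $K$ equals the genus of $\mathcal{C}$, which is $1$ because the cubic is nonsingular and birational to $30\mathrm{a}2$. As a cross-check, I would run the genus formula
\[
g=\frac{(7-1)(7-2)}{2}-\sum_P\delta_P=15-\sum_P\delta_P,
\]
which requires $\sum_P\delta_P=14$ over all singularities of $\overline{K}$, including those at infinity.

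\emph{Main obstacle.} The hardest step is the birationality of $\phi$. If $\phi$ had degree $2$, for instance factoring through the quotient by an involution corresponding to $(u,v)\mapsto(v,u)$, then the image genus could drop to $0$. My first attempt is therefore to compute the degree of the field extension $[\mathbb{Q}(\mathcal{C}):\mathbb{Q}(R_1,R_2)]$ directly, by counting preimages of a random point in a Gröbner basis computation. If that is inconclusive, I would compute the geometric genus of $K$ directly with an adjoint or integral-basis algorithm, for example Singular's \texttt{genus}.
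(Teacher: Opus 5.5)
Your plan works, and it differs from the paper's sketch mainly in how the genus is obtained. The paper does the elimination, extracts the symmetric septic factor, checks $K(u,v)=K(v,u)$ and that $\nabla K$ vanishes at $(0,0)$ and $(\tfrac23,\tfrac23)$, and then computes the geometric genus of $K$ directly by computer algebra. You instead deduce the genus from birationality of $\phi=(R_1,R_2)\colon\mathcal{C}\to K$.

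Your route gives more than the number. It identifies the normalization of $K$ with $\mathcal{C}$, and hence with $30\mathrm{a}2$ over $\mathbb{Q}$. The paper's closing remark about the ``same underlying elliptic geometry'' only asserts this informally, and genus $1$ alone would not give it.

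The obstacle you flag does resolve favourably. The coefficient of $v^7$ in $K$ is the constant $3$, so $u$ has degree $7$ on $K$. A pole count shows that $R_1$ also has degree $7$ as a function on $\mathcal{C}$, so $\deg\phi=1$.

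The degree-$2$ mechanism you describe cannot occur in any case. The involution $\iota\colon(x,y)\mapsto(1-x,y)$ of $\mathcal{C}$ satisfies $\phi\circ\iota=\sigma\circ\phi$, where $\sigma(u,v)=(v,u)$. So it is $(s,p)=(R_1+R_2,R_1R_2)$, not $\phi$, that factors through $\mathcal{C}/\iota\cong\mathbb{P}^1_y$. This is why $G(s,p)=0$ is a rational curve.

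One assertion in your Step 2 is false over $\mathbb{C}$: $(0,0)$ and $(\tfrac23,\tfrac23)$ are not the only affine singular points. Write $G=a(s)+b(s)p+c(s)p^2$. Then the discriminant $b^2-4ac$ equals $4(3s^2-4s+2)^2(6s^2-2s+1)$. So $G=0$ has two complex-conjugate nodes over the roots of $3s^2-4s+2$, and these lie off the parabola $s^2=4p$. Each node lifts to two nodes of $K$, giving four non-real affine nodes.

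Your $\delta$-count needs these four nodes. The full tally is:
\begin{itemize}
\item a tacnode at the origin, contributing $2$;
\item a node at $(\tfrac23,\tfrac23)$, contributing $1$;
\item the four non-real affine nodes, contributing $4$;
\item an ordinary triple point at $(1:-1:0)$, contributing $3$;
\item tacnodes at the two points of $u^2+uv+v^2=0$ at infinity, contributing $2+2$.
\end{itemize}
This sums to $14$, as required. Without the four extra nodes your cross-check would fall short by $4$. The theorem only asserts that the two listed points are singular, so this error does not affect the statement itself.
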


\begin{proof}[Sketch of proof]
The elimination can be carried out symbolically using computer algebra.
The resulting equation factors into several components, among which the
symmetric septic factor $K(u,v)$ corresponds to the actual geometric
locus.  A direct computation confirms that
\[
K(u,v)=K(v,u),
\]
and that the points $(0,0)$ and $(2/3,2/3)$ are singular.
The genus computation shows that the geometric genus
of the curve is $1$.
\end{proof}

Thus the trajectory curve is a singular plane model of an
elliptic curve.
In particular, both the configuration space
$d(x,y)=0$ and the trajectory curve $K(u,v)=0$
encode the same underlying elliptic geometry.

\appendix
\section{Two large factors in numerators of $R_1$ and $R_2$}

\begin{align*}
Q_1(x,y)=\;&
-4 x^{4} - 12 x^{3} y + x^{2} y^{2} + 9 x y^{3} + 8 x^{3}
 + 42 x^{2} y + 10 x y^{2} - 27 y^{3} \\
&+ 17 x^{2} - 15 x y - 38 y^{2} - 12 x - 15 y.
\end{align*}
\[
 Q_2(x,y)= 4 x^{4} - 4 x^{3} y - 13 x^{2} y^{2} + 9 x y^{3} - 8 x^{3} + 2 x^{2} y + 44 x y^{2} - 18 y^{3} + 7 x^{2} + 29 x y - 49 y^{2} + 6 x - 36 y - 9
\]

\section{The polynomial defining the curve of the strut-triangle intersection}
\label{appendix:G}
The polynomial $G(s,p)$ defined in section $\ref{sec:intersection}$ is given by
\begin{eqnarray*}
 G(s,p) & =&
 3 \mathit{s}^{7} - 7 \mathit{s}^{6} - 6 \mathit{s}^{5} \mathit{p} - \mathit{s}^{5} + 8 \mathit{s}^{4} \mathit{p} + 3 \mathit{s}^{3} \mathit{p}^{2} + 5 \mathit{s}^{4} + 28 \mathit{s}^{3} \mathit{p} - \mathit{s}^{2} \mathit{p}^{2} \\ &&- 3 \mathit{s}^{3} - 24 \mathit{s}^{2} \mathit{p} - 24 \mathit{s} \mathit{p}^{2} + \mathit{s}^{2} + 12 \mathit{s} \mathit{p} - 4 \mathit{p}.
\end{eqnarray*}

\section*{Acknowledgements}

The authors thank Risa Omura and Haruka Sakamoto for helpful discussions,
and for developing visualization tools
that assisted our exploration of the tensegrity configurations.


\end{document}